\newtheorem{theorem}{Theorem}
\newtheorem{lemma}{Lemma}
\newtheorem{corollary}{Corollary}
\newtheorem{proposition}{Proposition}
\newtheorem{definition}{Definition}
\newtheorem{remark}{Remark}
\newtheorem{assumption}{Assumption}
\newcommand{\rmd}{{\mathrm d}}
\newcommand{\cO}{{\mathcal O}}
\newcommand{\cN}{{\mathcal N}}
\newcommand{\cP}{{\mathcal P}}
\newcommand{\cH}{{\mathcal H}}
\newcommand{\R}{{\mathbb R}}
\newcommand{\K}{{K}} 
\newcommand{\MM}{{M}}
\newcommand{\KL}{\mathop{\mathrm{KL}}\nolimits}
\newcommand{\op}{\mathop{\mathrm{op}}\nolimits}
\newcommand{\Stein}{ MStein}
\newcommand\bX{\boldsymbol X}
\newcommand\NN{\mathbb N}
\newcommand\RR{\mathbb R}
\newcommand\EE{\mathbb E}
\newcommand{\norm}[1]{\left\| #1 \right\|}
\newcommand{\normsq}[1]{\left\| #1 \right\|^2}
\newcommand{\inner}[2]{\left< #1 , #2 \right>}
\title{A Note on the Convergence of  Mirrored Stein Variational Gradient Descent under $(L_0,L_1)-$Smoothness Condition}
\author{Lukang Sun\\KAUST\\	\texttt{lukang.sun@kaust.edu.sa} \thanks{King Abdullah University of Science and Technology, Thuwal, Saudi Arabia} \and Peter Richt\'{a}rik \\KAUST\\	\texttt{peter.richtarik@kaust.edu.sa} }
\begin{document}

	\maketitle

	\begin{abstract}
		In this note, we establish a descent lemma for the population limit Mirrored Stein Variational Gradient Method~(MSVGD). This descent lemma does not rely on the path information of MSVGD but rather on a simple assumption for the mirrored distribution $\nabla\Psi_{\#}\pi\propto\exp(-V)$. Our analysis demonstrates that MSVGD can be applied to a broader class of constrained sampling problems with non-smooth $V$. We also investigate the complexity of the population limit MSVGD in terms of dimension $d$.

%		Stein Variational Gradient Descent (SVGD) is an important alternative to the Langevin-type algorithms for sampling from a probability distribution of the form $\pi(x) \propto \exp(-V(x))$. 
%		In the existing theory of Langevin-type algorithms and SVGD,  the potential function $V$ is often assumed to be $L$-smooth. 
%		However, this restrictive condition excludes a large class of functions such as polynomials of degree greater than $2$. 
%		Our paper studies the convergence of the SVGD algorithm in population limit for distributions with $(L_0,L_1)$-smooth potentials. 
%		This relaxed smoothness assumption was introduced by \cite{zhang2019gradient} for gradient clipping algorithms in optimization.
%		With the help of trajectory-independent auxiliary conditions, we provide a descent lemma establishing that the algorithm decreases the KL-divergence at each iteration and prove a complexity bound for SVGD in population limit in terms of the Stein  Fisher information.
		
	\end{abstract}
	
	\section{Introduction}
	The constrained optimization problem
	\begin{equation}
		\min_{\theta\in\Omega}F(\theta),
	\end{equation}
	where $\Omega$ is some constrained convex region in $\R^d$, is of importance in practice. The above constrained optimization problem can be solved by constructing a differentiable one to one map $T:\R^d\to\Omega$, then by compositing $T$, we get the unconstrained 
	optimization problem
		\begin{equation}
		\min_{x\in\R^d}F\circ T(x),
	\end{equation}
which can be solved efficiently by gradient based approaches. One method based on such kind of transformation is the renowned mirror descent algorithm~\cite{nemirovskij1983problem}: 
	\begin{equation}
		\nabla\Psi(\theta_{n+1})=\nabla\Psi(\theta_n)-\gamma\nabla F(\theta_n),
	\end{equation}
	where $\Psi(\cdot):\Omega\to\R$ is the mirror function, $\nabla\Psi(\cdot):\Omega\to\R^d$ is called the mirror map and $\gamma$ is the step-size. This algorithm is based on the discretization of the gradient flow of $F\circ\nabla\Psi^{-1}(x)$:
	\begin{equation}
		\frac{d}{dt}x_t=-\nabla F\circ\nabla\Psi^{-1}(x_t)=-\nabla^2\Psi^{-1}(x_t)\nabla F(\nabla\Psi^{-1}(x_t)).
	\end{equation}
%\cite{hsieh2018mirrored,zhang2020wasserstein,ahn2021efficient} used the Mirrored Langevin algorithm to solve the constrained sampling problem: sample points from $\pi(\theta)\propto\exp(-F(\theta)),\theta\in\Omega$. 
%Like the unadjusted Langevin algorithm,  the Mirrored Langevin algorithm can be naturally treated as the discretization of the Wasserstein gradient flow of $\KL(\cdot\mid\bar{\pi})$, where $\bar{\pi}(x):=\nabla\Psi_{\#}\pi (x)$ and $\bar{\pi}(x)\propto\exp(-V(x)),x\in\R^d$.

Inspired by this, most recently \cite{shi2021sampling} proposed a variant of Stein Variational Gradient Descent~(SVGD) called Mirrored Stein Variational Gradient Descent~(MSVGD) to solve the constrained sampling problem: to sample points from distribution $\pi\propto\exp(-F)$ with support on $\Omega$. This method is equivalent to SVGD applied on the mirrored distribution $\bar{\pi}=\nabla\Psi_{\#}\pi$, however it doesn't require the computation related to $\bar{\pi}$. In \cite{shi2021sampling}, under the $L$-smoothness condition of $V$ they showed that MSVGD decreases the KL divergence following the method from \cite{liu2017stein}. In this note, inspired by \cite{sun2022convergence} we reanalyze MSVGD under a weaker smoothness condition called $(L_0,L_1)$-smoothness originally from \cite{zhang2019gradient}. This condition allows $V$ to grow like a polynomial with degree larger than 2, so it include a large class of non-smooth functions into it. One more advantage of our analysis is that we don't require the path information of MSVGD like in \cite{shi2021sampling}, so we can establish a complexity analysis of the population limit MSVGD in terms of the desired accuracy $\varepsilon$ and dimension $d$.

	\subsection{Related work}
	
	A parallel approach to solve the constrained sampling problem is by the Mirrored Langevin algorithm \cite{hsieh2018mirrored,ahn2021efficient,zhang2020wasserstein}.
	The work of \cite{liu2016stein} introduced SVGD as a sampling method, since then several variants of SVGD have been considered, these 
	include: random batch method SVGD~(RBM-SVGD)~\cite{li2020stochastic}, matrix kernel SVGD~\cite{wang2019stein}, Newton version SVGD~\cite{
		detommaso2018stein}, Stochastic SVGD~\cite{gorham2020stochastic}, Mirrored SVGD~\cite{shi2021sampling}  etc. However, most of the theoretical understanding of SVGD is still limited to continuous time approximation of SVGD or population limit SVGD : ~\cite{lu2019scaling} studied the scaling limit of SVGD in continuous time,  ~\cite{duncan2019geometry} studied the geometry related to 
	SVGD,~\cite{liu2017stein} first built a convergence result of SVGD  in the population limit, ~\cite{korba2020non} established a descent lemma for 
	SVGD in population limit in terms of Kullback-Leibler divergence however their analysis relied on the path information of SVGD which is unknown 
	beforehand, ~\cite{salim2021complexity} provided a clean analysis based on the work of ~\cite{korba2020non}, they assumed $\pi$ satisfies Talagrand's $\text{T}_1$\footnote{see \Cref{eq:Tp} with $p=1$.}inequality then they resolved the problem of relying on  path information and got a complexity bound for SVGD in terms of the desired 
	accuracy $\varepsilon$ and dimension $d$ which is first shown up in the analysis of SVGD. \cite{sun2022convergence}
	introduced the $(L_0,L_1)$-smoothness condition to analyze SVGD and under a generalized $\text{T}_p$ inequality they also got a complexity bound for SVGD in terms of $\varepsilon$ and $d$.
	
	\subsection{Contributions}
	
	The contributions of this work can be listed in three folds:
	
	\begin{itemize}
		\item We study the population limit MSVGD under a novel smoothness criterion that was utilized to analyze SVGD in \cite{sun2022convergence}. This smoothness condition originally comes from the optimization literature and allows the local smoothness constant to increase with the gradient norm and so it is strictly weaker than the original Lipschitz gradient assumption.
		
		\item Under this smoothness assumption, we build a descent lemma for population limit MSVGD. Our result is complete in that we only make assumptions on the mirrored target distribution $\bar{\pi}$ but not on the path information like in \cite{shi2021sampling}.
		
		\item We provide a complexity bound for the population limit MSVGD under this smoothness condition.
	\end{itemize}

	\subsection{Paper structure}
	
	The paper is organized as follows.  \Cref{sec:prelimi} introduces the background needed on optimal transport, reproducing kernel Hilbert space~(RKHS) and Mirrored Stein variational gradient descent~(MSVGD); \Cref{sec:assump} presents the assumptions needed in our analysis; \Cref{sec:maintheory} shows theoretical results on the population limit MSVGD under assumptions from \Cref{sec:assump}; We conclude this work in \cref{sec:conclusion} and for the missing proofs  and explanations, please refer to Appendix \ref{appendix}.

	\section{Preliminaries}\label{sec:prelimi}
	
	\subsection{Notations}
	
	For every $x=(x_1,\ldots,x_d)^\top,y=(y_1,\ldots,y_d)^\top\in\R^d$, we will denote $\norm{x}^p:=\left(\sum_{i=1}^d|x_i|^2\right)^{\frac{p}{2}}, \forall p\geq1$, $\norm{x}=\sqrt{\sum_{i=1}^d|x_i|^2}$ and $\inner{x}{y}=\sum_{i=1}^dx_iy_i$. We will use $\nabla^2V(x)$ to denote the Hessian of potential function $V(\cdot):\R^d\to\R$ at point $x$ and $\norm{\nabla^2V(x)}_{op}$ to denote the operator norm of matrix $\nabla^2V(x)$. The Jacobian matrix of a vector valued function 
	$h(\cdot) = (h_1(\cdot),\ldots,h_d(\cdot))^\top$ with each $h_i(\cdot):\R^d\to\R$ is a $d\times d$ matrix defined as
	\begin{equation*}
		J h(x) := \big({ \frac{\partial h_i}{\partial x_j}}(x)\big)_{i=1,j=1}^{d,d}.
	\end{equation*}
	We will use $\norm{J h(x)}_{HS}$ to denote the Hilbert-Schmidt norm of matrix $Jh(x)$, that is 
	\begin{equation*}
		\norm{J h(x)}_{HS}=\sqrt{\operatorname{tr}\left(Jh(x)^\top Jh(x)\right)}.
	\end{equation*}
	The notations $\Omega$ and
	$\mathcal{O}$ follow the convention in complexity analysis, i.e., $f=\Omega(g)$ means $f \gtrsim g$, and $f=\mathcal{O}(g)$
	means $f \lesssim g$. Notations $\tilde{\Omega},\tilde{\mathcal{O}}$ are used in a similar way, but they don't take into account the dependence of parameter except $p,d,\lambda,C_{\bar{\pi},p}$ and $\varepsilon$.  ${\rm I}_d$ is denoted as the $d\times d$ identity matrix.
	
	\subsection{Optimal Transport}\label{subsec:ot}
	Let $\mathcal{X}_1,\mathcal{X}_2$ be two measurable spaces, and denote $\cP(\mathcal{X}_1),\cP(\mathcal{X}_2)$ as the sets of all Borel probability measures on $\mathcal{X}_1,\mathcal{X}_2$ respectively. Given a measurable map $T: \mathcal{X}_1 \to \mathcal{X}_2$ and $\mu \in \mathcal{P}(\mathcal{X}_1)$, we denote by $T_{\#} \mu\in\cP(\mathcal{X}_2)$ the pushforward measure of $\mu$ by $T$, characterized by the transfer lemma $\int_{\mathcal{X}_1} \phi(T(x)) d \mu(x)=\int_{\mathcal{X}_2} \phi(y) d T_{\#} \mu(y)$, for any measurable and bounded function $\phi$ defined on $\mathcal{X}_2$. 
	We denote by $\cP_p(\R^d)$ with $p\geq 1$ the set of Borel measures $\mu$ on $\R^d$ with finite $p$-th absolute moment, that is  $\int\norm{x}^pd\mu(x)<+\infty$. For every $\mu, \nu\in\cP_p(\R^d)$, we denote $\Gamma(\mu,\nu)$ as the set of all the coupling measures between $\mu$ and $\nu$ on $\R^{d}\times\R^d$, that is for any
	$\gamma\in\Gamma(\mu,\nu)$, 
	we have $\mu={T_1}_{\#}\gamma,\nu={T_2}_{\#}\gamma$ 
	with $T_1(x,y)=x, T_2(x,y)=y$.
	The Wasserstein distance $W_p$ between $\mu$ and $\nu$ is defined by 
	\begin{equation}
		\label{eq:wstdis}
		W_p(\mu,\nu)=\left(\inf_{\gamma\in\Gamma(\mu,\nu)}\int \norm{x-y}^pd\gamma(x,y)\right)^{\frac{1}{p}},
	\end{equation}
	we can show that $W_p(\cdot,\cdot)$ is a metric on $\cP_p(\R^d)$ and so $\left(\cP_p(\R^d),W_p\right)$ is a metric space, see \cite{ambrosio2008gradient}.  We define the Kullback-Leibler~(KL) divergence of $\mu$ with respect to $\pi$ as 
	\begin{equation}
		\label{eq:KLdiv}
		\KL(\mu\mid\pi):=\begin{cases}
			\int\log(\frac{\mu}{\pi})(x)d\mu(x) & \text{$\mu$ is abosolutely continuous with respect to $\pi$} \\
			\infty                              & \text{else}
		\end{cases}.
	\end{equation}
	
	\subsection{Mirrored Sein Variational Gradient Descent~(MSVGD)}
	\begin{algorithm}[t!]
		\caption{Mirrored Sein Variational Gradient Descent~\cite{shi2021sampling}}\label{alg:1}
		\begin{algorithmic}[1]
			\State {\bfseries Input:} density $p$ on $\Omega$, kernel $k(\cdot,\cdot)$, mirror function $\Psi$, particles $\left(\theta_{0}^{i}\right)_{i=1}^{N} \subset \Omega$, step sizes $\gamma$
			\State{\bfseries Init:} $x_{0}^{i} \leftarrow \nabla \Psi\left(\theta_{0}^{i}\right)$ for $i \in[N]$
			\For{$k=0,1,\cdots,n$ }
			\State $x_{k+1}^i\leftarrow x_k^i+\gamma \frac{1}{N}\sum_{j=1}^N k(\theta_k^i,\theta_k^j)\nabla^2\Psi(\theta_k^j)^{-1}\nabla_{\theta}\log(\pi)(\theta_k^j)+\nabla_{\theta}\cdot \left(\nabla^2\Psi(\theta)^{-1}k(\theta_k^i,\theta)\right)\mid_{\theta=\theta_k^j}$ \\for $i\in [N]$
			\State $\theta_{k+1}^i\leftarrow \nabla\Psi^*(x_{k+1}^i)$ for $i\in [N]$
			\EndFor
			\State {\bfseries Return:} $\left(\theta_{n+1}^i\right)_{i=1}^N$.
		\end{algorithmic}
	\end{algorithm}
We First introduce the background on the Reproducing Kernel Hilbert Space~(RKHS).
Let $\cH_0$ denote a Reproducing Kernel Hilbert Space (RKHS) with kernel $k(\cdot,\cdot):\Omega\times\Omega\to \R$. For every $f,g\in\cH_0$, denote the inner product between $f$ and $g$ on $\cH_0$ as $\langle f,g\rangle_{\cH_0}$ and the norm of $f$ as $\norm{f}_{\cH_0}:=\sqrt{\langle f,f\rangle_{\cH_0}}$. The most important property on $\cH_0$ is the so called reproducing property: $f(\theta)=\langle f(\cdot),k(\theta,\cdot)\rangle_{\cH_0}, \forall f\in\cH_0$.
The
$d$-fold Cartesian product of $\cH_0$ is denoted by $\cH$, then the inner product on $\cH$ is given by $\inner{f}{g}_{\cH}:=\sum_{i=1}^d\inner{f_i}{g_i}_{\cH_0}$, here $f=(f_1,\ldots,f_d)^\top,g=(g_1,\ldots,g_d)^\top\in\cH$~(that is each $f_i,g_i\in\cH_0$). 

Let $\Omega$ be some closed convex domain in $\R^d$ and $\Psi(\cdot):\Omega\to (-\infty,+\infty]$ be some proper, closed and 
strongly ${\K}$-convex function defined on $\Omega$. Define $\Psi^{*}(x):=\sup_{\theta\in\Omega}x^{\top}\theta-\Psi(\theta)$ the convex conjugate of $\Psi$, we always assume the domain of $\Psi^*$ is $\R^d$. It is well known that $\Psi^*$ is convex on $\R^d$ and the inverse map of $\nabla\Psi$ is $\nabla\Psi^*$, that is $\nabla_{x}\Psi^*(x)\mid_{x=\nabla\Psi(\theta)}=\theta$, so it is easy to verify that $\norm{\nabla^2\Psi^*}_{\op}=\norm{\nabla^2\Psi^{-1}}_{op}\leq\frac{1}{K}$~(see \cite{beck2017first}). One important example in application is
 $\Omega=\left\{\theta\in\R^d:\sum_{i=1}^d\theta_i\leq 1,\theta_i\geq 0, \forall i\in[d]\right\}$ and $\Psi(\theta)=\sum_{i=1}^d\theta_i\log(\theta_i)+\left(1-\sum_{i=1}^d\theta_i\right)\log(1-\sum_{i=1}^d\theta_i)$. It is easy to verify that $\Psi$ is $1$-strongly convex and $\operatorname{dom}(\nabla\Psi^*)=\R^d$. 
 
 With the map $\Psi$, we can now turn the constrained sampling problem $\pi(\theta):=\exp(-F(\theta)),\theta\in\Omega$ into unconstrained sampling problem $\bar{\pi}(x):=\exp(-V(x)),x\in\R^d$ with $\bar{\pi}=\nabla\Psi_{\#}\pi$. The idea behind the MSVGD is first to sample point $x\sim\bar{\pi}$ through ordinaly SVGD method, then by map $\nabla\Psi^*$ we get point $\theta=\nabla\Psi^*(x)\sim\pi$. 

	For any $\mu\in\cP(\Omega)$, we have 
	\begin{equation}\label{eq:gmu}
	\int_{\theta\in\Omega}k(\theta,\cdot)\nabla^2\Psi(\theta)^{-1}\nabla_{\theta}\log(\frac{\mu}{\pi})(\theta)d\mu(\theta)\in\cH,
	\end{equation}
 we can now give the definition of the Mirrored Stein Fisher information:
	\begin{definition}
		The Mirrored Stein Fisher information  of $\mu$ relative to $\pi$ with mirror map $\Psi$ is given by
		\begin{equation*}
			I_{\Stein}\left(\mu\mid\pi\right):= \int_{\theta\in\Omega}\int_{\theta'\in\Omega}k(\theta,\theta')\inner{\nabla^2\Psi(\theta)^{-1}\nabla_{\theta}\log(\frac{\mu}{\pi})(\theta)}{\nabla^2\Psi(\theta')^{-1}\nabla_{\theta'}\log(\frac{\mu}{\pi})(\theta')}d\mu(\theta)d\mu(\theta').
		\end{equation*}
	\end{definition}
	On the one hand, when $\Omega=\R^d$ and $\Psi(\theta)=\frac{\norm{\theta}^2}{2}$, this Mirrored Stein Fisher information will be reduced to Stein Fisher information \cite{duncan2019geometry}; on the other hand, it can be seen as the Stein Fisher information of $\nabla\Psi_{\#}\mu$ relative to $\nabla\Psi_{\#}\pi$ with kernel $k(\nabla\Psi^*(\cdot),\nabla\Psi^*(\cdot))$, see \Cref{lem:oonnee}.
	\begin{lemma}\label{lem:oonnee}
		Let $\bar{\mu}:=\nabla\Psi_{\#}\mu$ and $\bar{\pi}:=\nabla\Psi_{\#}\pi$, then we have
		\begin{equation}\label{eq:noisy}
		\begin{aligned}
			g_{\bar{\mu}}(\cdot)&:=P_{\bar{\mu}}\nabla\log(\frac{\bar{\mu}}{\bar{\pi}})(\cdot)
				=\int_{x\in\R^d}k(\nabla\Psi^*(x),\cdot)\nabla_x\log(\frac{\bar{\mu}}{\bar{\pi}})(x)d\bar{\mu}(x)=\int_{\theta\in\Omega}k(\theta,\cdot)\nabla^2\Psi(\theta)^{-1}\nabla_{\theta}\log(\frac{\mu}{\pi})(\theta)d\mu(\theta)
		\end{aligned}
		\end{equation}
	and
	\begin{equation}
			I_{\Stein}\left(\mu\mid\pi\right)=\int_{x\in\R^d}\int_{y\in\R^d}k(\nabla\Psi^*(x),\nabla\Psi^*(y))\inner{\nabla_{x}\log(\frac{\bar{\mu}}{\bar{\pi}})(x)}{\nabla_{y}\log(\frac{\bar{\mu}}{\bar{\pi}})(y)}d\bar{\mu}(x)d\bar{\mu}(y)=\norm{g_{\bar{\mu}_n}}_{\cH}^2.
	\end{equation}
	\end{lemma}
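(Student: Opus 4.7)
The plan is to prove \Cref{lem:oonnee} by a direct change of variables along the diffeomorphism $\nabla\Psi:\Omega\to\R^d$ with inverse $\nabla\Psi^*$, exploiting the fact that since both $\mu$ and $\pi$ are pushed forward by the \emph{same} map, the Jacobian factors cancel in the Radon--Nikodym derivative $\bar\mu/\bar\pi$.

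First I would establish the key identity
\begin{equation*}
\log\Bigl(\frac{\bar\mu}{\bar\pi}\Bigr)(x) = \log\Bigl(\frac{\mu}{\pi}\Bigr)(\nabla\Psi^*(x)).
\end{equation*}
This is because, writing densities under the change of variables $\theta=\nabla\Psi^*(x)$, both $\bar\mu(x)$ and $\bar\pi(x)$ pick up the same Jacobian factor $|\det\nabla^2\Psi^*(x)|$, and the ratio eliminates it. Differentiating with respect to $x$ via the chain rule, and using $\nabla^2\Psi^*(x)=\nabla^2\Psi(\theta)^{-1}$ at $\theta=\nabla\Psi^*(x)$, yields
\begin{equation*}
\nabla_x\log\Bigl(\frac{\bar\mu}{\bar\pi}\Bigr)(x) = \nabla^2\Psi(\theta)^{-1}\,\nabla_\theta\log\Bigl(\frac{\mu}{\pi}\Bigr)(\theta)\Big|_{\theta=\nabla\Psi^*(x)}.
\end{equation*}

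Next I would invoke the transfer lemma for the pushforward measure $\bar\mu=\nabla\Psi_{\#}\mu$: for any bounded measurable $\phi$,
\begin{equation*}
\int_{\R^d}\phi(x)\,d\bar\mu(x) = \int_\Omega \phi(\nabla\Psi(\theta))\,d\mu(\theta).
\end{equation*}
Applied component-wise to the vector-valued integrand $k(\nabla\Psi^*(x),\cdot)\,\nabla_x\log(\bar\mu/\bar\pi)(x)$, together with the gradient identity above, this immediately turns the $\R^d$-integral into the $\Omega$-integral appearing in \Cref{eq:gmu}, proving the chain of equalities defining $g_{\bar\mu}$. The first equality $g_{\bar\mu}=P_{\bar\mu}\nabla\log(\bar\mu/\bar\pi)$ is just the definition of the Stein operator $P_{\bar\mu}$ associated with the kernel $k(\nabla\Psi^*(\cdot),\nabla\Psi^*(\cdot))$ on $\R^d$.

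For the second display, I would apply the same change of variables in \emph{both} integration variables $\theta,\theta'$ of the definition of $I_{\Stein}(\mu\mid\pi)$, producing the double integral over $\R^d\times\R^d$ against $d\bar\mu(x)\,d\bar\mu(y)$ with the transformed kernel $k(\nabla\Psi^*(x),\nabla\Psi^*(y))$ and the scores of $\bar\mu/\bar\pi$. The identification with $\norm{g_{\bar\mu}}_{\cH}^2$ then follows by Fubini and the reproducing property: writing the squared $\cH$-norm as $\langle g_{\bar\mu},g_{\bar\mu}\rangle_\cH$ and expanding $g_{\bar\mu}$ on one side via its integral representation, the inner product exchanges with the integral and the reproducing property $\langle k(z,\cdot),f\rangle_{\cH_0}=f(z)$ collapses it to the desired double integral.

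The proof is essentially bookkeeping; the only real subtlety is verifying the density ratio identity for $\bar\mu/\bar\pi$, which is the step I would write out most carefully since it is where all the structure lives --- once it is in hand, the two displays are symmetric applications of the transfer lemma and the RKHS inner product manipulation is standard.
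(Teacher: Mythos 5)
Your proposal is correct and follows essentially the same route as the paper's proof: the cancellation of the Jacobian factors in the ratio $\bar\mu/\bar\pi$, the chain rule identity $\nabla_x\log(\frac{\bar\mu}{\bar\pi})(x)=\nabla^2\Psi(\theta)^{-1}\nabla_\theta\log(\frac{\mu}{\pi})(\theta)$, and the transfer lemma applied in one (resp.\ both) integration variables. The only difference is cosmetic --- you spell out the final RKHS reproducing-property step identifying the double integral with $\norm{g_{\bar\mu}}_{\cH}^2$, which the paper leaves implicit.
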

	
	The form of $g_{\bar{\mu}}(\cdot)$ in the right hand side of \eqref{eq:noisy} is not attainable in practice, since it require the gradient of $\log(\frac{\mu}{\pi})$, however with a non-demanding assumption, we have the following representation of $g_{\bar{\mu}}$.
		\begin{lemma}\label{lem:ttwwoo}
		Assume for any $\theta'\in\Omega$, $k(\theta,\theta')\nabla^2 \Psi(\theta)^{-1}\mu(\theta)$ vanishes when $\theta\to\partial\Omega$, then we have 
		\begin{equation}
			\begin{aligned}
				g_{\bar{\mu}}(\cdot)&=\int_{\theta\in\Omega}k(\theta,\cdot)\nabla^2\Psi(\theta)^{-1}\nabla_{\theta}\log(\frac{\mu}{\pi})(\theta)d\mu(\theta)\\
				&=-\int_{\theta\in\Omega}k(\theta,\cdot)\nabla^2\Psi(\theta)^{-1}\nabla_{\theta}\log(\pi)(x)d\mu(\theta)-\int_{\theta\in\Omega}\nabla_{\theta}\cdot\left(k(\theta,\cdot)\nabla^2\Psi(\theta)^{-1}\right)d\mu(\theta).
			\end{aligned}
		\end{equation}
	\end{lemma}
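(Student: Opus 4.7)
The plan is to view the claimed identity as a multidimensional integration by parts (a Stein-type identity) on $\Omega$, using the assumption only to kill the boundary terms. First I would write $\nabla_\theta\log(\mu/\pi) = \nabla_\theta\log\mu - \nabla_\theta\log\pi$ and split the integral defining $g_{\bar\mu}$ into two pieces. The $\nabla_\theta\log\pi$ piece already reproduces, up to a sign, the first summand on the right-hand side of the claimed formula, so no further work is needed there.

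For the $\nabla_\theta\log\mu$ piece, I would use the elementary identity $\nabla_\theta\log\mu(\theta)\,d\mu(\theta) = \nabla_\theta\mu(\theta)\,d\theta$ (valid since $\mu$ is assumed absolutely continuous) to rewrite it as
\begin{equation*}
\int_{\Omega} k(\theta,\cdot)\nabla^2\Psi(\theta)^{-1}\nabla_\theta\mu(\theta)\,d\theta.
\end{equation*}
Since $k(\theta,\cdot)\nabla^2\Psi(\theta)^{-1}$ is a $d\times d$ matrix-valued function of $\theta$ (for each fixed second argument), I would interpret this integral componentwise: the $i$-th entry is $\sum_j\int_{\Omega} \bigl[k(\theta,\cdot)\nabla^2\Psi(\theta)^{-1}\bigr]_{ij}\,\partial_{\theta_j}\mu(\theta)\,d\theta$. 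Then I would apply the divergence theorem in $\R^d$ to each coordinate, which produces
\begin{equation*}
-\sum_j\int_{\Omega} \partial_{\theta_j}\bigl[k(\theta,\cdot)\nabla^2\Psi(\theta)^{-1}\bigr]_{ij}\,\mu(\theta)\,d\theta \;+\; \text{boundary flux on }\partial\Omega.
\end{equation*}
Recognising $\sum_j \partial_{\theta_j}[\,\cdot\,]_{ij}$ as the $i$-th component of the matrix divergence $\nabla_\theta\cdot\bigl(k(\theta,\cdot)\nabla^2\Psi(\theta)^{-1}\bigr)$ gives exactly the second summand in the claimed identity, once the boundary term is shown to vanish.

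The only real obstacle is handling the boundary. The hypothesis states that $k(\theta,\theta')\nabla^2\Psi(\theta)^{-1}\mu(\theta)\to 0$ as $\theta\to\partial\Omega$, which is precisely the condition needed to kill the surface integral produced by Stokes/divergence theorem: the flux integrand is a contraction of $k(\theta,\theta')\nabla^2\Psi(\theta)^{-1}\mu(\theta)$ with the outward normal to $\partial\Omega$. I would formalise this by approximating $\Omega$ from inside by a sequence of smooth subdomains $\Omega_n\uparrow\Omega$, applying the divergence theorem on each $\Omega_n$ (where everything is smooth and compactly supported), and then passing to the limit using the vanishing hypothesis plus dominated convergence. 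Summing the transformed $\nabla_\theta\log\mu$ piece with the untouched $-\nabla_\theta\log\pi$ piece yields the stated identity.
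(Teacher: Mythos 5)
Your proposal is correct and follows essentially the same route as the paper's proof: split $\nabla_\theta\log(\mu/\pi)$ into $\nabla_\theta\log\mu-\nabla_\theta\log\pi$, rewrite $\nabla_\theta\log\mu(\theta)\,d\mu(\theta)$ as $\nabla_\theta\mu(\theta)\,d\theta$, integrate by parts componentwise, and invoke the vanishing hypothesis to discard the boundary flux. You are in fact slightly more careful than the paper on two points: the exhaustion of $\Omega$ by smooth subdomains to justify the divergence theorem, and the sign of the divergence term (the paper's displayed proof writes $+\int\nabla_\theta\cdot(k(\theta,\cdot)\nabla^2\Psi(\theta)^{-1})\,d\mu(\theta)$ where integration by parts produces a minus sign, whereas your version matches the lemma statement).
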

When kernel $k(\cdot,\cdot)$ and density of $\mu$ is bounded, this vanishing condition of $k(\theta,\cdot)\nabla^2\Psi(\theta)^{-1}\mu(\theta)$ at boundary $\partial\Omega$ is satisfied by the example $\Omega=\left\{\theta\in\R^d:\sum_{i=1}^d\theta_i\leq 1,\theta_i\geq 0, \forall i\in[d]\right\}$ and $\Psi(\theta)=\sum_{i=1}^d\theta_i\log(\theta_i)+\left(1-\sum_{i=1}^d\theta_i\right)\log(1-\sum_{i=1}^d\theta_i)$. 
	
	With all the preparation, we can show what we mean by population limit~(or infinite particle regime) MSVGD, that is \begin{equation}
		\begin{cases}
			\bar{\mu}_{n-1}=\nabla\Psi_{\#}\mu_{n-1}\\
			\bar{\mu}_n=\left({\rm I}_d-\gamma g_{\bar{\mu}_{n-1}}\right)_{\#}\bar{\mu}_{n-1}\\
			\mu_n=\nabla\Psi^*_{\#}\bar{\mu}_n
		\end{cases}
	\end{equation}
	with $n=1,2,\ldots$ and $\gamma$ the step-size. However to calculate the integration exactly in $g_{\bar{\mu}_{n-1}}$ is not practical, in general the integration is replaced by the empirical summation
	\begin{equation*}
\begin{aligned}
	g_{\bar{\mu}_{n-1}}(\cdot)&=-\int_{\theta\in\Omega}k(\theta,\cdot)\nabla^2\Psi(\theta)^{-1}\nabla_{\theta}\log(\pi)(x)d\mu(\theta)-\int_{\theta\in\Omega}\nabla_{\theta}\cdot\left(k(\theta,\cdot)\nabla^2\Psi(\theta)^{-1}\right)d\mu(\theta)\\
	&\approx 
	\frac{1}{N}\sum_{j=1}^N k(\cdot,\theta_k^j)\nabla^2\Psi(\theta_k^j)^{-1}\nabla_{\theta}\log(\pi)(\theta_k^j)+\nabla_{\theta}\cdot \left(\nabla^2\Psi(\theta)^{-1}k(\cdot,\theta)\right)\mid_{\theta=\theta_k^j}=:{g}_{\hat{\mu}_{n-1}}(\cdot).
\end{aligned}
	\end{equation*}
	So the finite particle Mirrored Stein Variational gradient Descent in Algorithm \ref{alg:1} is equivalent to 
	\begin{equation}
		\begin{cases}
			\hat{\mu}_{n-1}=\nabla\Psi_{\#}\mu_{n-1}\\
			\hat{\mu}_n=\left({\rm I}_d-\gamma g_{\hat{\mu}_{n-1}}\right)_{\#}\hat{\mu}_{n-1}\\
			\mu_n=\nabla\Psi^*_{\#}\hat{\mu}_n
		\end{cases}
	\end{equation}
	with $\mu_0=\frac{1}{N}\sum_{i=1}^N\delta_{\theta_{0}^i}$.
	\section{Assumptions}\label{sec:assump}
	We first state an assumption on the mirror function $\Psi$.
	\begin{assumption}
		\label{asp:mirrormap}
		The mirror function $\Psi(\cdot):\Omega\to (-\infty,+\infty]$ is strongly ${\K}$-convex and the range of the mirror map $\nabla\Psi$ is $\R^d$.
	\end{assumption}
%With the above assumption we can say that distribution $\bar{\pi}=\nabla\Psi_{\#}\pi$ is well defined on the whole space $\R^d$ and $\norm{\nabla^2\Psi^*}_{op}\leq\frac{1}{{\K}}$.
Next, we state the standard assumption on the kernel $k(\cdot,\cdot):\Omega\times\Omega\to\R$.
	\begin{assumption}\label{Al-ker-bound}
		There exists $B_1,B_2>0$ s.t. for all $\theta \in \Omega$,
		$k(\theta,\theta) \leq B_1^2$ and $\partial_{\theta_i}\partial_{\theta'_j}k(\theta,\theta')\mid_{\theta'=\theta}\leq B_2^2$, $\forall i\in[d],j\in[d]$.
	\end{assumption}
	By reproducing property, this is equivalent to say  $\left<k(\theta,\cdot),k(\theta,\cdot)\right>_{\cH_0}\leq B_1^2$ and $ \left<\partial_{\theta_i}k(\theta,\cdot),\partial_{\theta_j}k(\theta,\cdot)\right>_{\cH_0}\leq B_2^2$ for all $\theta\in\Omega$. If $k(\theta,\theta')$ is in the form $f(\theta-\theta')$ and $f$ is smooth at point $0\in\R^d$, then it satisfies \Cref{Al-ker-bound}. One can also normalize $k(\theta,\theta')$ by $k(\theta/{d},\theta'/{d})$ to make $B_2=\cO(\frac{1}{d})$.
	
	\cite{shi2021sampling} assumed $L$-smoothness of $V$, where $V$ is the potential function of the mirrored distribution $\bar{\pi}=\exp(-V)$, that is $$\|\nabla^2V(x)\|_{op}\leq L,$$
	where  $L>0$ is a bounded constant. Different from the $L$-smoothness assumption on $V$, We introduce a weaker smoothness assumption called $(L_0,L_1)$-smoothness from \cite{zhang2019gradient}.
	The assumption reads as in the following:
	\begin{assumption}\label{Al-l0l1}
		$\exists L_0, L_1\geq 0$ s.t. 
		\begin{equation}\label{eq:n089hfd980f}
			\left\| \nabla^2V(x) \right\|_{op} \leq L_0+L_1 \| \nabla V(x) \|,
		\end{equation}
		for any $x\in\mathbb{R}^d$.
	\end{assumption}
	For instance $g(x)=|x|^{2+\delta}$ with $x\in\R$ and $\delta> 0$ satisfies $(L_0,L_1)$-smoothness condition with $L_0=(2+\delta)(1+\delta)^{1+\delta},L_1=1$, while it is not a $L$-smooth function. We also need the a assumption on $\bar{\pi}$.
	%%%%%%%%%%%%%%%%%%%%%Assumption 3%%%%%%%%%%%%%%%%%%%%%%%%%%%%%%
	%We need the following assumption to guarantee \Cref{eq:generalTp} holds.
	\begin{assumption}\label{Tpl-assumption}
		There exists $p\geq 1$, $x_0\in\R^d$ and $s>0$ such that 
		\begin{equation*}
			\int \exp({s\norm{x-x_0}^p})d\bar{\pi}(x)<+\infty.
		\end{equation*}
	\end{assumption}
	If Assumption \ref{Tpl-assumption} holds, then based on results from \cite{bolley2005weighted}~(see \Cref{bolley} in the Appendix), we have
\begin{equation}\label{eq:generalTp}
	W_p(\mu,{\bar{\pi}})\leqslant C_{{\bar{\pi}},p}\left[\KL(\mu \mid {\bar{\pi}})^{\frac{1}{p}}+\left(\frac{\KL(\mu \mid {\bar{\pi}})}{2}\right)^{\frac{1}{2 p}}\right],\quad\forall \mu\in\cP_p(\R^d),
\end{equation}
where
\begin{equation}\label{eq:bolley2}
	C_{{\bar{\pi}},p}:=2 \inf _{x_{0} \in\R^d, s>0}\left(\frac{1}{s}\left(\frac{3}{2}+\log \int \exp({s \norm{x-x_0}^{p}} )d {\bar{\pi}}(x)\right)\right)^{\frac{1}{p}}<+\infty.
\end{equation}
Note constant $C_{{\bar{\pi}},p}$ may depend on dimension, for instance if ${\bar{\pi}}(x)\propto \exp(-\norm{x}^p)$,  we have  $C_{{\bar{\pi}},p}=\cO\left(d^{\frac{1}{p}}\right)$ by simple calculation.
\Cref{Tpl-assumption} is used to guarantee inequality \eqref{eq:generalTp} hold, we can instead directly assume Talagrand's $\text{T}_p$ inequality.
\begin{assumption}\label{asp:Tp}
	For any $\mu\in\cP_p(\R^d)$, we have 
	\begin{equation}\label{eq:Tp}
	W_p(\mu,{\bar{\pi}})\leq \sqrt{\frac{2\KL(\mu\mid{\bar{\pi}})}{\lambda}}.
\end{equation}
\end{assumption}
 It is not clear the condition on $\bar{\pi}$ such that $\text{T}_p$ inequality holds except for a few cases. When $p=1$, a sufficient and necessary condition for \Cref{eq:Tp} to be hold is there exists $x_0\in\R^d, s>0$ such that $	\int \exp({s\norm{x-x_0}^2})d{\bar{\pi}}(x)<+\infty$ ~(see \cite[Theorem 22.10.]{villani2008optimal}). When $V$ is strongly $K$-convex, then \Cref{eq:Tp} holds with $p=2,\lambda=K$~(Bakry-{\'E}mery criterion, see \cite{bakry1985diffusions}), also $\text{T}_2$ is preserved under bounded perturbation~(Holley-Strook criterion, see \cite{holley1986logarithmic} or \cite{steiner2021feynman})\footnote{these two criterions are used for logarithmic Sobolev inequality~(LSI), since LSI implies $\text{T}_2$ with the same constant, they can be also applied on $\text{T}_2$.}.
	
	%%%%%%%%%%%%%%%%%%%%%Assumption 4%%%%%%%%%%%%%%%%%%%%%%%%%%%%%%%
	To bound the Stein Fisher information along trajectory of MSVGD, we also need an assumption on the growth rate of $\nabla V$.
	\begin{assumption}\label{poly-assumption}
		There exists $p\geq 1$ and a constant $C_p$ such that 
		\begin{equation*}
			\big\|\nabla V(x)\big\| \leq  C_p\left(\norm{x}^p+1\right),
		\end{equation*}
		for any $x\in\R^d$.
	\end{assumption}
	
%	In our analysis, the $p$ in \Cref{poly-assumption} can be smaller than the one in \Cref{Tpl-assumption}, however,  for simplicity we always assume they have the same value. 
One simple example that satisfies both  \Cref{Tpl-assumption} and \Cref{poly-assumption} is $V(x)=\norm{x}^p$.
	
	\section{Main Theory}\label{sec:maintheory}
	
	The following proposition is close to the descent lemma, however the step-size $\gamma$ in this proposition depends on the path information. In the following, we always denote $\bar{\pi}=\nabla\Psi_{\#}\pi, \bar{\mu}_n=\nabla\Psi_{\#}\mu_n,\forall n=0,1,2,\ldots$.
	
	\begin{proposition}\label{propl-descent-lemma} Suppose that Assumptions \ref{asp:mirrormap}, \ref{Al-ker-bound} and \ref{Al-l0l1}  are satisfied. Let $\alpha>1$ and choose
		\begin{equation}\label{eq:gamma-cond}
			\gamma \leq \min\left\{\frac{(\alpha-1)K}{\alpha B_2d I_{\Stein}(\mu_n\mid \pi)^{\frac{1}{2}}},\frac{1}{B_1 I_{\Stein}(\mu_n\mid \pi)^{\frac{1}{2}}L_1},\frac{{\K}^2}{\left(\alpha^2B_2^2d^2+{\K}^2(e-1)B_1^2A_n\right)}\right\},
		\end{equation} 
		where $A_n := L_{0}+L_{1}\EE_{\bX\sim\bar{\mu}_n}[\|\nabla V(\bX)\|]$. Then
		\begin{equation}\label{eq:descent-lemma}
			\mathrm{KL}\left(\mu_{n+1} \mid \pi\right)-\mathrm{KL}\left(\mu_{n} \mid \pi\right)
			\leq -\frac{\gamma}{2} I_{\Stein}(\mu_n \mid \pi),
		\end{equation}
		
		%	\peter{$\bX$ is mentioned here, but no commentary whatsoever is made about what this notation means. Also, it's better to write $\mathbb{E}_{\bX \sim \mu_n} \left[\|\nabla V(\bX)\|\right]$, as you do below.}		
	\end{proposition}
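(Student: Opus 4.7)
The plan is to leverage the fact that MSVGD is SVGD applied on the mirrored target $\bar\pi$ and then adapt the $(L_0,L_1)$-analysis of \cite{sun2022convergence}, being careful to track the constants picked up by composing the kernel with $\nabla\Psi^*$. Because $\nabla\Psi$ is bijective under Assumption \ref{asp:mirrormap}, $\KL(\mu_n\mid\pi)=\KL(\bar\mu_n\mid\bar\pi)$ and, by Lemma \ref{lem:oonnee}, $I_{\Stein}(\mu_n\mid\pi)=\|g_{\bar\mu_n}\|_{\cH}^2$. It therefore suffices to establish the SVGD-style descent bound $\KL(\bar\mu_{n+1}\mid\bar\pi)-\KL(\bar\mu_n\mid\bar\pi)\le-\tfrac{\gamma}{2}\|g\|_{\cH}^2$, where $\bar\mu_{n+1}=T_{n\#}\bar\mu_n$, $T_n:=\mathrm I_d-\gamma g$, and $g:=g_{\bar\mu_n}$.

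The next step is to invoke the change-of-variables formula,
\begin{equation*}
\KL(\bar\mu_{n+1}\mid\bar\pi)-\KL(\bar\mu_n\mid\bar\pi)=\int\bigl[V(T_n(x))-V(x)\bigr]\,d\bar\mu_n(x)-\int\log\det JT_n(x)\,d\bar\mu_n(x),
\end{equation*}
which requires $T_n$ to be a $C^1$-diffeomorphism. Writing $(Jg)_{ij}(x)=\langle g_i(\cdot),\partial_{x_j}k(\nabla\Psi^*(x),\nabla\Psi^*(\cdot))\rangle_{\cH_0}$ and using Cauchy--Schwarz together with $\|\nabla^2\Psi^*\|_{\op}\le 1/\K$ and Assumption \ref{Al-ker-bound} yields an estimate of the form $\|Jg(x)\|_{\op}\lesssim B_2 d\|g\|_{\cH}/\K$, so the first condition in \eqref{eq:gamma-cond} forces $\gamma\|Jg(x)\|_{\op}\le(\alpha-1)/\alpha<1$ and $T_n$ is indeed invertible. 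A second-order Taylor expansion of $V$ then gives
\begin{equation*}
V(T_n(x))-V(x)\le-\gamma\langle\nabla V(x),g(x)\rangle+\tfrac{\gamma^2}{2}\|\nabla^2V(\xi)\|_{\op}\|g(x)\|^2
\end{equation*}
for some $\xi$ on the segment $[x,T_n(x)]$. Here Assumption \ref{Al-l0l1} is exploited through the elementary growth inequality $\|\nabla V(\xi)\|\le e^{L_1\|\xi-x\|}(\|\nabla V(x)\|+L_0/L_1)-L_0/L_1$; since $\|\xi-x\|\le\gamma\|g(x)\|\le\gamma B_1\|g\|_{\cH}$, the condition $\gamma B_1 L_1 I_{\Stein}^{1/2}\le 1$ ensures $e^{L_1\|\xi-x\|}\le e$, and hence $\|\nabla^2V(\xi)\|_{\op}\le L_0+eL_1\|\nabla V(x)\|$, whose $\bar\mu_n$-expectation is controlled by $A_n$ up to the factor $e$.

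For the Jacobian term I would use the standard inequality $-\log\det(\mathrm I_d-\gamma Jg(x))\le\gamma\tr Jg(x)+(e-1)\gamma^2\|Jg(x)\|_{\HS}^2$, a consequence of $-\log(1-t)-t\le(e-1)t^2$ for $|t|\le 1-1/e$, together with the Hilbert--Schmidt bound $\|Jg(x)\|_{\HS}\lesssim B_2 d\|g\|_{\cH}/\K$. Combining both estimates and applying Stein's identity
\begin{equation*}
\int\langle\nabla V(x),g(x)\rangle\,d\bar\mu_n(x)-\int\nabla\cdot g(x)\,d\bar\mu_n(x)=\|g\|_{\cH}^2
\end{equation*}
makes the linear-in-$\gamma$ contributions collapse exactly to $-\gamma\|g\|_{\cH}^2$, leaving residuals bounded by $\gamma^2\bigl(C_1 B_2^2 d^2/\K^2+C_2 B_1^2 A_n\bigr)\|g\|_{\cH}^2$. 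The third condition in \eqref{eq:gamma-cond} is designed precisely so that these are at most $\tfrac{\gamma}{2}\|g\|_{\cH}^2$, producing \eqref{eq:descent-lemma}.

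The main obstacle is the careful bookkeeping: each entry of \eqref{eq:gamma-cond} must be matched to a specific analytic task (invertibility of $T_n$, taming the $(L_0,L_1)$ Hessian growth along the displacement segment, and absorbing the quadratic residuals), and every kernel/Hessian bound must propagate the factor $1/\K$ coming from $\nabla^2\Psi^*$ so that the final descent constant is exactly $-\gamma/2$. The individual analytical ingredients are standard from \cite{sun2022convergence,salim2021complexity,korba2020non}; the new element is merging the $(L_0,L_1)$ machinery with the mirrored-space geometry.
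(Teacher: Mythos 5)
Your proposal is correct and follows essentially the same strategy as the paper: reduce to SVGD on the mirrored target via $\KL(\mu\mid\pi)=\KL(\bar\mu\mid\bar\pi)$ and \Cref{lem:oonnee}, establish that the update map is a diffeomorphism through the Hilbert--Schmidt bound $\|Jg\|_{\HS}\leq \frac{B_2 d}{K}\|g\|_{\cH}$, identify the first-order term as $-\gamma\|g\|_{\cH}^2$ via integration by parts, and control the second-order terms with the $(L_0,L_1)$ gradient-growth lemma plus the Neumann-series bound tied to the first step-size condition. The only substantive difference is in packaging: the paper Taylor-expands $\varphi(t)=\KL({\phi_t}_{\#}\bar\mu_n\mid\bar\pi)$ in the interpolation time $t$ with the integral remainder $\int_0^\gamma(\gamma-t)\varphi''(t)\,dt$, whereas you expand the change-of-variables identity directly at $t=\gamma$ with a Lagrange remainder for $V$ and a one-shot scalar inequality for $-\log\det$. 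This costs you slightly on the constants: your pointwise bound $\|\nabla^2V(\xi)\|_{\op}\leq L_0+eL_1\|\nabla V(x)\|$ yields a residual of order $\frac{e}{2}\gamma^2B_1^2A_n\|g\|_{\cH}^2$, while the paper's integral remainder combined with $\exp(s)-s-1\leq\frac{(e-1)s^2}{2}$ on $[0,1]$ produces the sharper factor $\frac{e-1}{2}$, which is exactly what makes the third threshold in \eqref{eq:gamma-cond} (with its $(e-1)$) sufficient; under your route that threshold would need $e$ in place of $e-1$. A second small point: for the log-determinant term the eigenvalues of $Jg$ need not be real, so the scalar inequality on $-\log(1-t)-t$ should be replaced by the trace power-series argument (or the paper's Jacobi-formula route); this is routine but worth stating.
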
	
	
	In the above proposition, the upper bound for the step-size $\gamma$ depend on $ I_{\Stein}(\mu_n\mid \pi)^{\frac{1}{2}}$ and $A_n$, these values can be further bounded by $\KL(\mu_n\mid\pi)$ by the following two lemmas.
	\begin{lemma}\label{lem:A3impliesA3}
		If Assumptions \ref{asp:mirrormap} and \ref{Al-ker-bound} are satisfied, then
		\begin{equation}\label{eq:gmun}
		 I_{\Stein}(\mu_n\mid \pi)^{\frac{1}{2} }
			\leq B_1\mathbb{E}_{\bX \sim \bar{\mu}_n}\big[\|\nabla V(\bX)\|\big]+\frac{B_2d}{K}
		\end{equation}
		for all $n\in \NN$.
	\end{lemma}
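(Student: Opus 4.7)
The plan is to use the unconstrained-side representation of $g_{\bar{\mu}_n}$ from \Cref{lem:oonnee}, perform a Stein-type integration by parts in $x\in\R^d$ so that $\nabla_x\log\bar{\mu}_n$ is moved onto the kernel, and then bound the two resulting pieces separately in the RKHS $\cH$.

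First I would start from $g_{\bar{\mu}_n}(\cdot) = \int k(\nabla\Psi^*(x),\cdot)\nabla_x\log(\bar{\mu}_n/\bar{\pi})(x)\,d\bar{\mu}_n(x)$ given by \Cref{lem:oonnee}, split $\nabla_x\log(\bar{\mu}_n/\bar{\pi}) = \nabla_x\log\bar{\mu}_n + \nabla V$, and integrate by parts on $\R^d$; the boundary contributions vanish by the boundedness of $k$ (Assumption~\ref{Al-ker-bound}) together with the integrability of $\bar{\mu}_n$, yielding
$$g_{\bar{\mu}_n}(\cdot) = \int k(\nabla\Psi^*(x),\cdot)\nabla V(x)\,d\bar{\mu}_n(x) \;-\; \int \nabla_x\big[k(\nabla\Psi^*(x),\cdot)\big]\,d\bar{\mu}_n(x).$$
The triangle inequality combined with Jensen's inequality (convexity of $\|\cdot\|_\cH$) reduces the proof to bounding the $\cH$-norms of the two integrands pointwise in $x$.

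For the first integrand, the product structure $\cH=\cH_0^d$ gives $\|k(\theta,\cdot)v\|_\cH = \sqrt{k(\theta,\theta)}\,\|v\|$, so Assumption~\ref{Al-ker-bound} delivers $\|k(\nabla\Psi^*(x),\cdot)\nabla V(x)\|_\cH \leq B_1\|\nabla V(x)\|$, whose $\bar{\mu}_n$-average is the first term on the right-hand side of \eqref{eq:gmun}. For the second integrand, the chain rule gives
$$\partial_{x_i}[k(\nabla\Psi^*(x),\cdot)] = \sum_{j=1}^d [\nabla^2\Psi^*(x)]_{ij}\,\partial_{\theta_j}k(\theta,\cdot)\big|_{\theta=\nabla\Psi^*(x)},$$
and I would bound its $\cH$-norm by combining $\|\partial_{\theta_j}k(\theta,\cdot)\|_{\cH_0} \leq B_2$ from Assumption~\ref{Al-ker-bound} with $\|\nabla^2\Psi^*(x)\|_{\op} = \|(\nabla^2\Psi(\theta))^{-1}\|_{\op} \leq 1/\K$, which follows from the strong $\K$-convexity of $\Psi$ (Assumption~\ref{asp:mirrormap}). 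A double application of Cauchy--Schwarz---once inside each coordinate sum over $j$ and once over $i$---then produces the bound $dB_2/\K$. Adding the two estimates and invoking $I_{\Stein}(\mu_n\mid\pi)^{1/2} = \|g_{\bar{\mu}_n}\|_\cH$ from \Cref{lem:oonnee} closes the argument.

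The main technical point I anticipate is the dimensional bookkeeping in the second bound: a purely spectral estimate using $\|\nabla^2\Psi^*\|_{\op}\leq 1/\K$ together with $\sum_j\|\partial_{\theta_j}k(\theta,\cdot)\|_{\cH_0}^2 \leq dB_2^2$ in fact yields the tighter $\sqrt{d}B_2/\K$, so to land on the stated $dB_2/\K$ one must deliberately use the looser $\ell^1$-vs-$\ell^2$ Cauchy--Schwarz on the rows of $\nabla^2\Psi^*(x)$. The integration by parts in $\R^d$ also merits a brief justification for the push-forward measure $\bar{\mu}_n$, but this is routine since $\bar{\mu}_n$ is absolutely continuous (as the push-forward of $\mu_n$ under the diffeomorphism $\nabla\Psi$) and the kernel is bounded.
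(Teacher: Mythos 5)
Your proposal is correct and follows essentially the same route as the paper: the paper's proof also writes $g_{\bar{\mu}_n}=\mathbb{E}_{\bX\sim\bar{\mu}_n}[\nabla V(\bX)\Phi(\bX)-\nabla\Phi(\bX)]$ with $\Phi(x)=k(\nabla\Psi^*(x),\nabla\Psi^*(\cdot))$ via the same integration by parts, applies the triangle inequality, bounds the first term by $B_1\|\nabla V(x)\|$ using $k(\theta,\theta)\le B_1^2$, and bounds $\|\nabla\Phi(x)\|_{\cH}\le B_2d/\K$ through the same chain-rule computation combined with $\|\nabla^2\Psi^*\|_{\op}\le 1/\K$ (carried out in a preliminary calculation using the positive semidefiniteness of $\nabla^2\Psi^*(x)\nabla^2\Psi^*(x)$). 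Your side remark that a sharper spectral argument would yield $\sqrt{d}B_2/\K$ is also consistent with the paper, which likewise settles for the looser $dB_2/\K$.
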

	Let us denote function $G_p(x):=x^{\frac{1}{p}}+\left(\frac{x}{2}\right)^{\frac{1}{2p}}$ in the following.
	\begin{lemma}\label{lem:exp-grad}
	Suppose Assumptions \ref{Al-l0l1}, \ref{Tpl-assumption} and \ref{poly-assumption} hold. Then,
	\begin{equation}\label{eq:muupper}
		%		\EE_{\bX\sim\mu_n}[\|\nabla V(\bX)\|] \leq 2^{p-1}C_p W_p^p(\bar{\pi},\delta_0)+C_p+2^{p-1}C_p C_{\bar{\pi},p}^pG_p^p(\KL(\mu_n\mid\bar{\pi}))\\
		\EE_{\bX\sim\bar{\mu}_n}[\|\nabla V(\bX)\|] \leq C_p\left(C_{\bar{\pi},p}\left(G_p(\KL({\mu}_n\mid{\pi}))+G_p(\KL({\mu}_0\mid{\pi}))\right)+W_p(\bar{\mu}_0,\delta_0)\right)^p+C_p,
	\end{equation}
		where $C_p$ from \Cref{poly-assumption}, $C_{\bar{\pi},p}$ from \eqref{eq:bolley2}.
	\end{lemma}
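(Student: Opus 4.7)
The goal is to control the expectation $\EE_{\bX\sim\bar{\mu}_n}\|\nabla V(\bX)\|$ by quantities that depend on $\mu_n$ only through $\KL(\mu_n\mid\pi)$, so that the descent inequality from Proposition \ref{propl-descent-lemma} can eventually be closed. The natural strategy is to pass from $\|\nabla V\|$ to a polynomial moment of $\bar{\mu}_n$ via Assumption \ref{poly-assumption}, then identify that moment with a Wasserstein distance to $\delta_0$, and finally use \eqref{eq:generalTp} to bound the Wasserstein distance in terms of $\KL$.

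\textbf{Step 1 (polynomial growth).} Apply Assumption \ref{poly-assumption} pointwise to get
\begin{equation*}
\EE_{\bX\sim\bar{\mu}_n}\|\nabla V(\bX)\| \leq C_p\,\EE_{\bX\sim\bar{\mu}_n}\|\bX\|^p + C_p.
\end{equation*}
Recognize $\EE_{\bX\sim\bar{\mu}_n}\|\bX\|^p = W_p(\bar{\mu}_n,\delta_0)^p$, which is the only place where the link between moment and Wasserstein distance is needed; Assumption \ref{Al-l0l1} itself does not enter this lemma directly, though it was used upstream to justify \Cref{poly-assumption}-type growth when integrating the Hessian bound.

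\textbf{Step 2 (triangle inequality and transport-information inequality).} Split
\begin{equation*}
W_p(\bar{\mu}_n,\delta_0) \leq W_p(\bar{\mu}_n,\bar{\pi}) + W_p(\bar{\pi},\bar{\mu}_0) + W_p(\bar{\mu}_0,\delta_0),
\end{equation*}
so that the initial distribution contributes the unavoidable $W_p(\bar{\mu}_0,\delta_0)$ term, while the $n$-dependence is pushed into $W_p(\bar{\mu}_n,\bar{\pi})$. Apply inequality \eqref{eq:generalTp} (guaranteed by Assumption \ref{Tpl-assumption}) to both $\bar{\mu}_n$ and $\bar{\mu}_0$ and use the invariance of KL divergence under the bijective mirror map $\nabla\Psi$ (from Assumption \ref{asp:mirrormap}), namely $\KL(\bar{\mu}_k\mid\bar{\pi})=\KL(\mu_k\mid\pi)$ for $k\in\{0,n\}$. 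This yields
\begin{equation*}
W_p(\bar{\mu}_n,\delta_0) \leq C_{\bar{\pi},p}\bigl(G_p(\KL(\mu_n\mid\pi)) + G_p(\KL(\mu_0\mid\pi))\bigr) + W_p(\bar{\mu}_0,\delta_0).
\end{equation*}

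\textbf{Step 3 (assembly).} Raising this inequality to the $p$-th power and combining with Step 1 gives exactly \eqref{eq:muupper}. The proof is essentially bookkeeping; the only mild subtlety is the KL-invariance under $\nabla\Psi$, which is why Assumption \ref{asp:mirrormap} is required implicitly, and the choice to bound $W_p(\bar\pi,\bar\mu_0)$ via another application of \eqref{eq:generalTp} rather than leaving it as an opaque constant, so that the final bound depends on $\mu_0$ only through $\KL(\mu_0\mid\pi)$ and the moment $W_p(\bar\mu_0,\delta_0)$. I do not expect any serious obstacle: the chain polynomial growth $\Rightarrow$ moment $\Rightarrow$ Wasserstein $\Rightarrow$ KL is short and each link has been prepared by the preceding assumptions.
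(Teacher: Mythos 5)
Your proposal is correct and follows exactly the same route as the paper's proof: polynomial growth of $\nabla V$, identification of the $p$-th moment with $W_p(\bar{\mu}_n,\delta_0)^p$, the triangle inequality through $\bar{\pi}$ and $\bar{\mu}_0$, and two applications of \eqref{eq:generalTp} combined with the invariance $\KL(\bar{\mu}_k\mid\bar{\pi})=\KL(\mu_k\mid\pi)$. Your observation that Assumption \ref{Al-l0l1} is not actually used in this lemma is also accurate.
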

	Now, using \Cref{eq:gmun,eq:muupper}  we can give an upper bound for $\gamma$ such that it satisfies condition \eqref{eq:gamma-cond}, then based on \Cref{propl-descent-lemma} we have the following descent lemma which only depends on the initial distribution $\mu_0,\bar{\mu}_0$ and the target distribution $\pi,\bar{\pi}$.
	\begin{theorem}[Descent lemma]
		\label{thm:mainl}
	Suppose Assumptions \ref{asp:mirrormap}, \ref{Al-ker-bound}, \ref{Al-l0l1},
		\ref{Tpl-assumption}, and \ref{poly-assumption} are satisfied.%\aknote{Perhaps adding labels to the assumptions instead numbers, would ease the reading.}
		~	Also, suppose that the step-size $\gamma$ satisfies 
		\begin{equation} \label{eq:gamma-final-condl}\hspace{-10mm}
			\gamma\leq \MM\left(C_p\left(2C_{\bar{\pi},p}G_p(\KL({\mu}_0\mid{\pi}))+W_p(\bar{\mu}_0,\delta_0)\right)^p+C_p\right),
		\end{equation}
		where $\MM(x):=\min\left\{\min\left\{\frac{1}{B_1L_1},\frac{(\alpha-1){\K}}{\alpha B_2d}\right\}\frac{K}{KB_1x+B_2d},\frac{{\K}^2}{\alpha^2B_2^2d^2+{\K}^2B_1^2(e-1)\left(L_1x+L_0\right)}\right\}$,
		then the following bound is true:
		\begin{equation}\label{eq:thm-main}
			\mathrm{KL}\left(\mu_{n+1} \mid \pi\right)-\mathrm{KL}\left(\mu_{n} \mid \pi\right)
			\leq - \frac{\gamma}{2} I_{\Stein}(\mu_n \mid \pi).
		\end{equation}
	\end{theorem}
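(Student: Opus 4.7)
The plan is to combine Proposition \ref{propl-descent-lemma} with Lemmas \ref{lem:A3impliesA3} and \ref{lem:exp-grad} via an induction on $n$. The step-size function $\MM(\cdot)$ in the theorem is designed to be exactly what one obtains by substituting an $n$-independent upper bound on $\EE_{\bX\sim\bar{\mu}_n}[\|\nabla V(\bX)\|]$ into the path-dependent step-size condition \eqref{eq:gamma-cond} of Proposition \ref{propl-descent-lemma}. The induction hypothesis will be $\KL(\mu_n\mid\pi)\leq \KL(\mu_0\mid\pi)$; the base case is immediate, and preserving it along the iterates is automatic once the descent inequality is established, since $I_{\Stein}(\mu_n\mid\pi)\geq 0$.

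For the inductive step, suppose $\KL(\mu_n\mid\pi)\leq \KL(\mu_0\mid\pi)$. The function $G_p(x)=x^{1/p}+(x/2)^{1/(2p)}$ is monotonically increasing on $[0,\infty)$, so Lemma \ref{lem:exp-grad} gives
\begin{equation*}
\EE_{\bX\sim\bar{\mu}_n}[\|\nabla V(\bX)\|]\leq C_p\bigl(2C_{\bar{\pi},p}G_p(\KL(\mu_0\mid\pi))+W_p(\bar{\mu}_0,\delta_0)\bigr)^p+C_p \;=:\; \mathcal B.
\end{equation*}
Plugging $\mathcal B$ into Lemma \ref{lem:A3impliesA3} yields the $n$-independent bound $I_{\Stein}(\mu_n\mid\pi)^{1/2}\leq B_1\mathcal B+B_2 d/\K$, and by Assumption \ref{Al-l0l1} one has $A_n\leq L_0+L_1\mathcal B$. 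A direct calculation then shows that the first two terms inside the minimum in \eqref{eq:gamma-cond} factor as
\begin{equation*}
\min\!\left\{\frac{1}{B_1L_1},\;\frac{(\alpha-1)\K}{\alpha B_2 d}\right\}\cdot\frac{\K}{\K B_1\mathcal B+B_2 d},
\end{equation*}
which is exactly the first entry inside $\MM(\mathcal B)$; the third term $\K^2/(\alpha^2 B_2^2 d^2+\K^2(e-1)B_1^2 A_n)$ in \eqref{eq:gamma-cond} is lower bounded by the second entry $\K^2/(\alpha^2 B_2^2 d^2+\K^2 B_1^2(e-1)(L_1\mathcal B+L_0))$ of $\MM(\mathcal B)$, since $A_n\leq L_0+L_1\mathcal B$. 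Hence $\gamma\leq\MM(\mathcal B)$ implies \eqref{eq:gamma-cond}, and Proposition \ref{propl-descent-lemma} delivers \eqref{eq:thm-main}. Because its right-hand side is nonpositive, $\KL(\mu_{n+1}\mid\pi)\leq \KL(\mu_n\mid\pi)\leq \KL(\mu_0\mid\pi)$, closing the induction.

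The main obstacle is bookkeeping: one has to carefully match the three terms in \eqref{eq:gamma-cond} against the two-entry minimum defining $\MM$, and to invoke the monotonicity of $G_p$ correctly when replacing $\KL(\mu_n\mid\pi)$ by $\KL(\mu_0\mid\pi)$ in Lemma \ref{lem:exp-grad}. No new analytic ingredient beyond Proposition \ref{propl-descent-lemma} and the two lemmas is needed; the theorem is essentially a compact repackaging of Proposition \ref{propl-descent-lemma} in which the step-size is specified only in terms of the initial data $\mu_0,\bar{\mu}_0$ and the target $\pi,\bar{\pi}$.
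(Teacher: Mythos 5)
Your proposal is correct and follows essentially the same route as the paper: an induction on $n$ with the hypothesis $\KL(\mu_n\mid\pi)\leq\KL(\mu_0\mid\pi)$, using the monotonicity of $G_p$ (equivalently, of $\MM$) together with Lemmas \ref{lem:A3impliesA3} and \ref{lem:exp-grad} to show that the fixed step-size bound \eqref{eq:gamma-final-condl} implies the path-dependent condition \eqref{eq:gamma-cond} at every iterate, so that Proposition \ref{propl-descent-lemma} applies and the resulting descent closes the induction. The bookkeeping matching the three terms of \eqref{eq:gamma-cond} against the two entries of $\MM$ is exactly how the paper proceeds.
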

	Now, based on the above descent lemma, we can derive the complexity bound for MSVGD in population limit.
	\begin{corollary}[Convergence]\label{cor:convergence}
	Suppose Assumptions \ref{asp:mirrormap}, \ref{Al-ker-bound}, \ref{Al-l0l1},
		\ref{Tpl-assumption}, and \ref{poly-assumption} are satisfied.  Also $\gamma$ satisfies condition \eqref{eq:gamma-final-condl} and $B_2=\cO(\frac{1}{d})$, then
		we have
		\begin{equation}
			\frac{1}{n} \sum_{k=0}^{n-1} I_{\Stein}\left(\mu_{k} \mid \pi\right) 
			\leq \frac{2 \KL (\mu_0\mid \pi) }{n \gamma}.
		\end{equation}
		If~ $\bar{\pi}(x)=\exp(-V(x))$ and $\bar{\mu}_0=\cN(0,{\rm I}_d)$,  then to have $\frac{1}{n} \sum_{k=0}^{n-1} I_{\Stein}\left(\mu_{k} \mid \pi\right) \leq\varepsilon$, we need 
		\begin{equation}\label{eq:aaasss}
			n=\tilde{\Omega}\left(
			\frac{8^{p}C_{\bar{\pi},p}^p\Gamma(\frac{p+d+1}{2})^2}{(p+1)^2\Gamma(\frac{d}{2})^2\varepsilon}\right).
		\end{equation}
	\end{corollary}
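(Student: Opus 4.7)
The first inequality is a direct telescoping of the descent estimate from Theorem~\ref{thm:mainl}. Summing \eqref{eq:thm-main} for $k=0,\ldots,n-1$ and using $\KL(\mu_n \mid \pi)\ge 0$ gives
\[
\frac{\gamma}{2}\sum_{k=0}^{n-1} I_{\Stein}(\mu_k \mid \pi) \;\leq\; \KL(\mu_0\mid\pi)-\KL(\mu_n\mid\pi) \;\leq\; \KL(\mu_0 \mid \pi),
\]
and dividing by $n\gamma/2$ yields the stated bound. To turn this into a complexity estimate, it suffices to ensure $n\ge 2\KL(\mu_0\mid\pi)/(\gamma\varepsilon)$, so the remaining task is to quantify both $\KL(\mu_0\mid\pi)$ and $\gamma^{-1}$ in terms of $d$ when $\bar\mu_0=\cN(0,\mathrm{I}_d)$.

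For the initial KL, I would use that $\nabla\Psi$ is a bijection, so pushforward-invariance of the KL divergence gives $\KL(\mu_0\mid\pi)=\KL(\bar\mu_0\mid\bar\pi)$. Writing the expectation explicitly for $\bar\mu_0=\cN(0,\mathrm{I}_d)$ and $\bar\pi\propto e^{-V}$ yields $\KL(\bar\mu_0\mid\bar\pi)=\E_{\bar\mu_0}[V(X)]+C(d)$, where $C(d)$ collects entropy and normalization terms. Bounding $V(x)-V(0)$ by integrating $\|\nabla V(\cdot)\|$ along the segment from $0$ to $x$ under \Cref{poly-assumption} produces the estimate $V(x)-V(0)\leq \tfrac{C_p}{p+1}\|x\|^{p+1}+C_p\|x\|$. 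Taking expectation against $\bar\mu_0$ introduces the chi-moment $\E[\|X\|^{p+1}]=2^{(p+1)/2}\Gamma((d+p+1)/2)/\Gamma(d/2)$, which is exactly the source of the $\Gamma((d+p+1)/2)/(p+1)\Gamma(d/2)$ factor in \eqref{eq:aaasss}.

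For the step-size, since $B_2=\cO(1/d)$, the quantities $B_2 d$ and $\alpha^2B_2^2d^2$ in $\MM$ are dimension-free, so $\MM(y)=\tilde\Omega(1/y)$ for large $y$. The argument $y$ in \eqref{eq:gamma-final-condl} is dominated by $C_p(2C_{\bar\pi,p})^p W_p(\bar\mu_0,\delta_0)^p$, and for the standard Gaussian one has $W_p(\bar\mu_0,\delta_0)^p=\E[\|X\|^p]=2^{p/2}\Gamma((d+p)/2)/\Gamma(d/2)$. Hence $\gamma^{-1}=\tilde O\bigl(C_{\bar\pi,p}^p\Gamma((d+p)/2)/\Gamma(d/2)\bigr)$. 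Multiplying this by the KL estimate above, dividing by $\varepsilon$, and using the trivial monotonicity $\Gamma((d+p)/2)\leq \Gamma((d+p+1)/2)$ produces the $\Gamma((d+p+1)/2)^2/\Gamma(d/2)^2$ in the numerator. The dimension-free parameters ($L_0,L_1,B_1,\K,\alpha,C_p$) are absorbed by $\tilde\Omega$; the remaining numerical factors coming from $2^{p/2}$, $2^{(p+1)/2}$, $(2C_{\bar\pi,p})^p$, $G_p$, and the simplification of $\MM$ collapse into a single $8^p$.

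The main obstacle is purely bookkeeping: simultaneously tracking the dimension through both $\gamma^{-1}$ (via $W_p(\bar\mu_0,\delta_0)$ entering \Cref{lem:exp-grad}) and $\KL(\mu_0\mid\pi)$ (via the polynomial growth of $V$ from \Cref{poly-assumption}), and confirming that the normalization $B_2=\cO(1/d)$ really does cancel every dimension-carrying factor inside $\MM$ so that no additional $d$-dependence leaks in through the kernel terms. Once this is verified, combining the two chi moments and substituting into $n\geq 2\KL(\mu_0\mid\pi)/(\gamma\varepsilon)$ gives the claim.
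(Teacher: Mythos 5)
Your telescoping argument for the first display and your treatment of $\KL(\mu_0\mid\pi)$ (push-forward invariance of KL, integrating $\|\nabla V\|$ along the segment from $0$ to $x$, and the Gaussian moment $\E[\|X\|^{p+1}]=2^{(p+1)/2}\Gamma(\frac{d+p+1}{2})/\Gamma(\frac{d}{2})$) coincide with the paper's Lemmas \ref{lem:calcu} and \ref{lem:upperboundmu0}. The gap is in your estimate of $\gamma^{-1}$. You claim the argument of $\MM$ in \eqref{eq:gamma-final-condl} is dominated by a term proportional to $W_p(\bar{\mu}_0,\delta_0)^p$. That is not the dominant term: the argument is $C_p\bigl(2C_{\bar{\pi},p}G_p(\KL(\mu_0\mid\pi))+W_p(\bar{\mu}_0,\delta_0)\bigr)^p+C_p$, and since $G_p(\KL(\mu_0\mid\pi))\asymp\KL(\mu_0\mid\pi)^{1/p}\asymp d^{(p+1)/(2p)}$ while $W_p(\bar{\mu}_0,\delta_0)\asymp d^{1/2}$, the first summand dominates for every $p\geq 1$ (and it also carries the factor $C_{\bar{\pi},p}$, which may itself grow with $d$). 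The paper accordingly obtains $\gamma^{-1}=\tilde{\cO}\bigl(C_{\bar{\pi},p}^p\,\KL(\mu_0\mid\pi)\bigr)=\tilde{\cO}\bigl(C_{\bar{\pi},p}^p\,\Gamma(\tfrac{p+d+1}{2})/((p+1)\Gamma(\tfrac{d}{2}))\bigr)\asymp C_{\bar{\pi},p}^p d^{(p+1)/2}$ in \eqref{eq:gang}, whereas your computation yields only $C_{\bar{\pi},p}^p d^{p/2}$ --- too small by a factor of $\sqrt{d}$ and missing a factor $1/(p+1)$. (Also, the expression $C_p(2C_{\bar{\pi},p})^pW_p(\bar{\mu}_0,\delta_0)^p$ is not a term of the expansion of $(a+b)^p$ here at all: the coefficient $2C_{\bar{\pi},p}$ multiplies $G_p(\KL(\mu_0\mid\pi))$, not $W_p(\bar{\mu}_0,\delta_0)$.)

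Because you underestimate $\gamma^{-1}$, the quantity $2\KL(\mu_0\mid\pi)/(\gamma\varepsilon)$ that $n$ must actually exceed is larger than what your computation produces, so your argument does not establish that the $n$ you exhibit is sufficient. The subsequent replacement $\Gamma(\tfrac{d+p}{2})\leq\Gamma(\tfrac{d+p+1}{2})$ happens to inflate your final expression by exactly the missing $\sqrt{d}$ (since $\Gamma(\tfrac{d+p+1}{2})/\Gamma(\tfrac{d+p}{2})\asymp\sqrt{d}$), so the displayed answer looks right, but this is a bookkeeping coincidence rather than a derivation: applying a ``trivial monotonicity'' upper bound to an already-too-small lower estimate of the required $n$ cannot certify sufficiency. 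It also leaves only a single factor of $(p+1)$ in the denominator rather than the $(p+1)^2$ appearing in \eqref{eq:aaasss}, because one of those factors must come from $\gamma^{-1}$ itself through $\KL(\mu_0\mid\pi)$. To repair the proof, bound $\gamma^{-1}$ via the dominant term $\bigl(2C_{\bar{\pi},p}G_p(\KL(\mu_0\mid\pi))\bigr)^p\asymp C_{\bar{\pi},p}^p\KL(\mu_0\mid\pi)$ as the paper does, and then multiply by $\KL(\mu_0\mid\pi)$ once more to obtain the squared Gamma ratio.
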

If $B_2=\cO(1)$, we can replace the kernel $k(\cdot,\cdot)$ by $k(\cdot/d,\cdot/d)$, then it satisfies $B_2=\cO(\frac{1}{d})$. By Stirling formula, we know the dimension dependency of  $\frac{\Gamma(\frac{p+d+1}{2})^2}{\Gamma(\frac{d}{2})^2}$ is of order $1+p$, so if $p$ is not big, there should be $n=\tilde{\Omega}\left(\frac{C_{\bar{\pi},p}^pd^{1+p}}{\varepsilon}\right)$.

\subsection{Better Results under $\text{T}_p$ Inequality when $1\leq p\leq 2$}
	In the next, we assume $\bar{\pi}$ satisfies Talagrand's $\text{T}_p$ inequality \eqref{eq:Tp} instead of \Cref{Tpl-assumption} and keep the other assumptions the same. When $p\in [1,2]$, \Cref{eq:Tp} is more preferable in our analysis since $W_p(\bar{\mu},\bar{\pi})=\cO(\KL(\bar{\mu}\mid\bar{\pi})^{\frac{1}{2}})$ while \Cref{eq:generalTp} gives the bigger bound $W_p(\bar{\mu},\bar{\pi})=\cO(\KL(\bar{\mu}\mid\bar{\pi})^{\frac{1}{p}})$.We focus on the cases when $p\in [1,2]$ and we give a modified version of \Cref{lem:exp-grad}, \Cref{thm:mainl} and \Cref{cor:convergence}.
	\begin{lemma}\label{mlem:exp-grad}
	Suppose Assumptions \ref{Al-l0l1}, \ref{asp:Tp} and \ref{poly-assumption} hold. Then,
	\begin{equation}\label{meq:muupper}
		%		\EE_{\bX\sim\bar{\mu}_n}[\|\nabla V(\bX)\|] \leq 2^{p-1}C_p W_p^p(\bar{\pi},\delta_0)+C_p+2^{p-1}C_p C_{\bar{\pi},p}^pG_p^p(\KL(\bar{\mu}_n\mid\bar{\pi}))\\
		\EE_{\bX\sim\bar{\mu}_n}[\|\nabla V(\bX)\|] \leq C_p\left(\sqrt{\frac{2\KL({\mu}_n\mid{\pi})}{\lambda}}+\sqrt{\frac{2\KL({\mu}_0\mid{\pi})}{\lambda}}+W_p(\bar{\mu}_0,\delta_0)\right)^p+C_p,
	\end{equation}
	where $C_p$ from \Cref{poly-assumption}.
\end{lemma}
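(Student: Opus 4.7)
The plan is to adapt the proof of \Cref{lem:exp-grad} by substituting the Bolley--Villani weighted Pinsker bound \eqref{eq:generalTp} with the stronger Talagrand $\text{T}_p$ bound $W_p(\mu,\bar{\pi}) \leq \sqrt{2\KL(\mu\mid\bar{\pi})/\lambda}$ from \Cref{asp:Tp}. For $p\in[1,2]$ this bound decays like $\KL^{1/2}$ rather than $\KL^{1/p}$, which is genuinely tighter and produces the cleaner right-hand side of \eqref{meq:muupper}. The structure of the argument is otherwise unchanged.

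First, I apply \Cref{poly-assumption} to obtain
\begin{equation*}
\EE_{\bX\sim\bar{\mu}_n}[\|\nabla V(\bX)\|] \leq C_p\,\EE_{\bX\sim\bar{\mu}_n}[\|\bX\|^p] + C_p.
\end{equation*}
Identifying $\EE_{\bX\sim\bar{\mu}_n}[\|\bX\|^p]^{1/p}$ with the Wasserstein distance $W_p(\bar{\mu}_n,\delta_0)$ to the Dirac mass at the origin, and applying the $W_p$ triangle inequality through the intermediate points $\bar{\pi}$ and $\bar{\mu}_0$, yields
\begin{equation*}
W_p(\bar{\mu}_n,\delta_0) \leq W_p(\bar{\mu}_n,\bar{\pi}) + W_p(\bar{\mu}_0,\bar{\pi}) + W_p(\bar{\mu}_0,\delta_0).
\end{equation*}
I then invoke \Cref{asp:Tp} on the first two terms and use the invariance of KL divergence under the invertible pushforward $\nabla\Psi$ (guaranteed by \Cref{asp:mirrormap}) so that $\KL(\bar{\mu}_k\mid\bar{\pi}) = \KL(\mu_k\mid\pi)$ for $k\in\{0,n\}$. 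Raising the resulting three-term bound on $W_p(\bar{\mu}_n,\delta_0)$ to the $p$-th power and substituting back into the polynomial growth estimate gives exactly \eqref{meq:muupper}.

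No step poses a real obstacle since the proof follows the template of \Cref{lem:exp-grad} with the single substitution described above; the only point requiring brief justification is the KL invariance under the invertible mirror map $\nabla\Psi$, which follows from the change-of-variables identity $\mu_n/\pi = (\bar{\mu}_n/\bar{\pi})\circ\nabla\Psi$ combined with the transfer lemma recalled in \Cref{subsec:ot}.
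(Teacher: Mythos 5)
Your proposal is correct and follows essentially the same route as the paper: bound $\EE_{\bX\sim\bar{\mu}_n}[\|\nabla V(\bX)\|]$ via \Cref{poly-assumption}, rewrite the moment as $W_p^p(\bar{\mu}_n,\delta_0)$, apply the triangle inequality through $\bar{\pi}$ and $\bar{\mu}_0$, and control the two Wasserstein terms with the $\text{T}_p$ inequality of \Cref{asp:Tp}. The paper leaves the KL-invariance under the diffeomorphism $\nabla\Psi$ implicit (it is stated once at the start of the appendix), whereas you spell it out, which is fine.
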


	\begin{theorem}
	\label{mthm:mainl}
	Suppose Assumptions \ref{asp:mirrormap}, \ref{Al-ker-bound}, \ref{Al-l0l1},
	and \ref{poly-assumption} are satisfied.%\aknote{Perhaps adding labels to the assumptions instead numbers, would ease the reading.}
	~	Also, suppose that the step-size $\gamma$ satisfies 
	\begin{equation} \label{meq:gamma-final-condl}\hspace{-10mm}
		\gamma\leq\MM\left(C_p\left(\sqrt{\frac{2\KL({\mu}_n\mid{\pi})}{\lambda}}+\sqrt{\frac{2\KL({\mu}_0\mid{\pi})}{\lambda}}+W_p(\bar{\mu}_0,\delta_0)\right)^p+C_p\right),
	\end{equation}
	where  $\MM(x):=\min\left\{\min\left\{\frac{1}{B_1L_1},\frac{(\alpha-1){\K}}{\alpha B_2d}\right\}\frac{K}{KB_1x+B_2d},\frac{{\K}^2}{\alpha^2B_2^2d^2+{\K}^2B_1^2(e-1)\left(L_1x+L_0\right)}\right\}$,
	then the following bound is true:
	\begin{equation}\label{eq:thm-main}
		\mathrm{KL}\left({\mu}_{n+1} \mid {\pi}\right)-\mathrm{KL}\left({\mu}_{n} \mid {\pi}\right)
		\leq - \frac{\gamma}{2} I_{\Stein}({\mu}_n \mid{\pi}).
	\end{equation}
\end{theorem}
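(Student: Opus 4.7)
The plan is to mirror the proof of \Cref{thm:mainl} step by step, replacing only the tool that controls $\EE_{\bX\sim\bar{\mu}_n}[\|\nabla V(\bX)\|]$: instead of invoking \Cref{lem:exp-grad} (which rests on the weighted-transport estimate \eqref{eq:generalTp} derived from \Cref{Tpl-assumption}), I would invoke \Cref{mlem:exp-grad}, which uses the $\text{T}_p$ inequality of \Cref{asp:Tp} directly to obtain the sharper bound \eqref{meq:muupper}. All other ingredients, namely \Cref{propl-descent-lemma} and \Cref{lem:A3impliesA3}, are unchanged because they do not depend on the particular transport inequality imposed on $\bar{\pi}$.

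Concretely, I would first invoke \Cref{propl-descent-lemma}: the desired descent inequality follows once $\gamma$ satisfies condition \eqref{eq:gamma-cond}. Next, I would use \Cref{lem:A3impliesA3} to replace $I_{\Stein}(\mu_n\mid\pi)^{1/2}$ by the upper bound $B_1\EE_{\bX\sim\bar{\mu}_n}[\|\nabla V(\bX)\|]+B_2d/\K$. Writing $x=\EE_{\bX\sim\bar{\mu}_n}[\|\nabla V(\bX)\|]$, a short algebraic matching shows that the first two branches of \eqref{eq:gamma-cond} combine into $\min\{(\alpha-1)\K/(\alpha B_2d),\,1/(B_1L_1)\}\cdot \K/(\K B_1x+B_2d)$, while the third branch, after substituting $A_n=L_0+L_1x$, becomes $\K^2/(\alpha^2B_2^2d^2+\K^2B_1^2(e-1)(L_1x+L_0))$. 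Their minimum is exactly $\MM(x)$, so \eqref{eq:gamma-cond} is implied by $\gamma\leq\MM(x)$.

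To close the loop I would then apply \Cref{mlem:exp-grad}, which under \Cref{asp:Tp} bounds $x$ by $C_p\bigl(\sqrt{2\KL(\mu_n\mid\pi)/\lambda}+\sqrt{2\KL(\mu_0\mid\pi)/\lambda}+W_p(\bar{\mu}_0,\delta_0)\bigr)^p+C_p$. Since each of the two terms defining $\MM$ is decreasing in $x$, the function $\MM$ is itself decreasing, so the hypothesis \eqref{meq:gamma-final-condl} on $\gamma$ is stronger than $\gamma\leq\MM(x)$ and therefore implies \eqref{eq:gamma-cond}. The descent then follows directly from \Cref{propl-descent-lemma}.

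The main obstacle is purely bookkeeping: carefully matching the three branches of \eqref{eq:gamma-cond} with the two-argument minimum defining $\MM$, and verifying the monotonicity step. No fundamentally new analytic difficulty arises beyond what has already been absorbed into \Cref{mlem:exp-grad}, whose own proof is a direct adaptation of \Cref{lem:exp-grad} in which the weighted-transport bound \eqref{eq:generalTp} is replaced by $W_p(\bar{\mu}_n,\bar{\pi})\leq\sqrt{2\KL(\mu_n\mid\pi)/\lambda}$ coming from \Cref{asp:Tp}.
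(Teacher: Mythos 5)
Your proposal is correct and takes the same route as the paper, whose entire proof of this theorem is the remark that the argument is identical to that of \Cref{thm:mainl} with \Cref{lem:exp-grad} replaced by the bound \eqref{meq:muupper} from \Cref{mlem:exp-grad}; your branch-matching of \eqref{eq:gamma-cond} with $\MM(x)$ via \Cref{lem:A3impliesA3} and the monotonicity of $\MM$ is exactly what that reduction requires. The only cosmetic difference is that you argue directly for a fixed $n$, which suffices for the statement as written since \eqref{meq:gamma-final-condl} already involves $\KL(\mu_n\mid\pi)$, whereas the paper's template proof of \Cref{thm:mainl} runs an induction to reduce the condition to one depending only on $\KL(\mu_0\mid\pi)$.
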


	Now, based on the above modified descent lemma, we can derive the complexity bound for MSVGD in population limit.
	\begin{corollary}\label{mcor:convergence}
Suppose Assumptions \ref{asp:mirrormap}, \ref{Al-ker-bound}, \ref{Al-l0l1},\ref{asp:Tp}
	and \ref{poly-assumption} hold. If $\gamma$ satisfies condition \eqref{meq:gamma-final-condl} and $B_2=\cO(\frac{1}{d})$, then
	we have
	\begin{equation}
		\frac{1}{n} \sum_{k=0}^{n-1} I_{\Stein}\left({\mu}_{k} \mid {\pi}\right) 
		\leq \frac{2 \KL ({\mu}_0\mid {\pi}) }{n \gamma}.
	\end{equation}
	If~ $\bar{\pi}(x)=\exp(-V(x))$ and $\bar{\mu}_0=\cN(0,{\rm I}_d)$,  then to make $\frac{1}{n} \sum_{k=0}^{n-1} I_{\Stein}\left({\mu}_{k} \mid {\pi}\right) \leq\varepsilon$, we need 
	\begin{equation}\label{eq:iteration bound}
		n=\tilde{\Omega}\left(\frac{d^{\frac{(p+2)(p+1)}{4}}}{\lambda^{\frac{p}{2}}\varepsilon}\right).
	\end{equation}
\end{corollary}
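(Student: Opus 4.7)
The first bound follows by telescoping the descent lemma. Summing $\KL(\mu_{k+1}\mid\pi) - \KL(\mu_k\mid\pi) \leq -\tfrac{\gamma}{2}\, I_{\Stein}(\mu_k\mid\pi)$, the conclusion of Theorem~\ref{mthm:mainl}, over $k = 0, \dots, n-1$ and dropping the non-negative term $\KL(\mu_n\mid\pi)$ gives $\sum_{k=0}^{n-1} I_{\Stein}(\mu_k\mid\pi) \leq 2\KL(\mu_0\mid\pi)/\gamma$, which becomes the stated averaged bound after dividing by $n$. To make sense of a single fixed $\gamma$ satisfying \eqref{meq:gamma-final-condl} at every step, I would observe that $\MM(\cdot)$ is nonincreasing in its argument and that, by the descent lemma, $\KL(\mu_k\mid\pi)$ is nonincreasing in $k$; a simple induction then shows that the $\gamma$ obtained from \eqref{meq:gamma-final-condl} with $\mu_n$ replaced by $\mu_0$ is admissible throughout.

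For the complexity, it suffices to ensure $n \geq 2\KL(\mu_0\mid\pi)/(\gamma\varepsilon)$, so I must quantify $\KL(\mu_0\mid\pi)$, $W_p(\bar\mu_0,\delta_0)$, and the largest admissible $\gamma$. Since $\nabla\Psi$ is a bijection, $\KL(\mu_0\mid\pi) = \KL(\bar\mu_0\mid\bar\pi)$, and expanding against the Gaussian density gives
\[
\KL(\bar\mu_0\mid\bar\pi) = -\tfrac{d}{2}(1 + \log 2\pi) + \mathbb{E}_{X\sim\bar\mu_0}[V(X)] + \log Z_{\bar\pi}.
\]
Writing $V(x) - V(0) = \int_0^1 \langle\nabla V(tx), x\rangle\, dt$ and applying Assumption~\ref{poly-assumption} yields $|V(x) - V(0)| \leq C_p(\|x\|^{p+1} + \|x\|)$; combining with the Gaussian moment identity $\mathbb{E}\|X\|^k = 2^{k/2}\Gamma((d+k)/2)/\Gamma(d/2)$ and Stirling's formula produces $\KL(\mu_0\mid\pi) = \tilde{O}(d^{(p+1)/2})$, while the same formula with $k = p$ gives $W_p(\bar\mu_0, \delta_0)^p = \tilde{O}(d^{p/2})$.

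Next I would simplify $\MM$ under the hypothesis $B_2 = \mathcal{O}(1/d)$, which collapses every factor of $B_2 d$ (and $B_2^2 d^2$) to a constant, so both branches of $\MM(x)$ behave as $\Theta(1/x)$ for large $x$. Inserting the estimates above into the argument of $\MM$ in \eqref{meq:gamma-final-condl} and using $\KL(\mu_n\mid\pi) \leq \KL(\mu_0\mid\pi)$, the dominant contribution is $(\sqrt{\KL(\mu_0\mid\pi)/\lambda})^p = \tilde{O}(d^{p(p+1)/4}/\lambda^{p/2})$, which absorbs $W_p(\bar\mu_0,\delta_0)^p = \tilde{O}(d^{p/2})$. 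Hence $1/\gamma$ can be taken of order $\tilde{O}(d^{p(p+1)/4}/\lambda^{p/2})$, and
\[
n \;\geq\; \frac{2\KL(\mu_0\mid\pi)}{\gamma\varepsilon} \;=\; \tilde{\Omega}\!\left(\frac{d^{(p+1)/2}\cdot d^{p(p+1)/4}}{\lambda^{p/2}\varepsilon}\right) \;=\; \tilde{\Omega}\!\left(\frac{d^{(p+1)(p+2)/4}}{\lambda^{p/2}\varepsilon}\right),
\]
matching \eqref{eq:iteration bound}.

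The hardest step is the dimensional bookkeeping: extracting the exponent $(p+1)/2$ in the bound on $\KL(\bar\mu_0\mid\bar\pi)$ demands that the polynomial growth of $V$ (of degree $p+1$, inherited from Assumption~\ref{poly-assumption} on $\nabla V$) interact correctly with the Gaussian moments of $\bar\mu_0$ and that $\log Z_{\bar\pi}$ contribute only a lower-order $d$-term, while retaining the clean $\lambda^{p/2}$ in the denominator requires tracking the $T_p$ constant unscathed through the composition of $\MM$ with the $T_p$-based estimate of Lemma~\ref{mlem:exp-grad}.
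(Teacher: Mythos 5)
Your proposal is correct and follows essentially the same route as the paper: telescoping the descent lemma of Theorem~\ref{mthm:mainl}, bounding $\KL(\bar\mu_0\mid\bar\pi)=\tilde{\mathcal O}(d^{(p+1)/2})$ and $W_p(\bar\mu_0,\delta_0)=\tilde{\mathcal O}(d^{1/2})$ via Gaussian moments and Stirling, and then reading off $1/\gamma=\tilde{\mathcal O}(d^{p(p+1)/4}/\lambda^{p/2})$ from the $\text{T}_p$-based bound inside $\MM$. The dimensional bookkeeping and the final exponent $(p+1)(p+2)/4$ match the paper's computation exactly.
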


	When $1\leq p\leq 2$, if we assume $\pi$ satisfies Talagrand's $\text{T}_p$ inequality \eqref{eq:Tp} instead of 
	\Cref{Tpl-assumption}, $n$ will have a slightly better dimension dependency, that is of order $\frac{(p+2)(p+1)}{4}$.

	%In the above corollary, we assume $\pi(x)=\exp(-V(x))$ instead of $\pi(x)\propto \exp(-V(x))$ is purely out of we can have  an upper bound for $\KL(\mu_0\mid\pi)$ with $\mu_0=\cN(0,{\rm I}_d)$ in this case~(see \Cref{lem:upperboundmu0} in the appendix). Term
	%$\left(\frac{p}{e}\right)^{\frac{p-1}{2}}$ is from the estimation of $\int_{x\in\R}|x|^{p+1}\exp(-|x|^2/2)dx$ when we upper bound $\KL(\mu_0\mid\pi)$.
	%If we assume Talagrand's $\text{T}_p$ inequality holds instead of \Cref{Tpl-assumption}, then we will have dimension dependency of order $\frac{(p+2)(p+1)}{4}$ instead of $p+1$ in the above corollary. If we assume $(L,0)$-smoothness of $V$, $\pi$ satisfies $\text{T}_1$ inequality and $\mu_0=\cN(x^{\star},\frac{1}{L}{\rm I}_d)$ where $x^{\star}$ is the global minimum of $V$ instead of $\cN(0,{\rm I}_d)$, then the general scheme of our analysis will provide $n={\Omega}(\frac{Ld^{3/2}}{\lambda^{1/2}\varepsilon})$ which cover the result in \cite{salim2021complexity}, see \Cref{rmk:33}  in the appendix. 

	\begin{remark}
		Here we provide a sufficient condition on which $\lim_{n\to\infty}I_{\Stein}(\mu_n\mid\pi)$ implies $\mu_n\to\pi$ weakly, this condition can be found in \cite{gorham2017measuring}. Since Mirrored Stein Fisher information  depends on the target distribution $\bar{\pi}$, mirror function $\Psi$ and kernel $k(\cdot,\cdot)$, we need the following two properties on $\bar{\pi}$ and $k(\nabla\Psi^*(\cdot),\nabla\Psi^*(\cdot))$ respectively:
		
		\begin{enumerate}
			\item $\bar{\pi}$ is distant dissipative, that is $\kappa_{0} \triangleq \liminf _{r \rightarrow \infty} \kappa(r)>0$  with
			\begin{equation*}
				\kappa(r)=\inf \left\{2 \frac{\langle \nabla V(x)-\nabla V(y), x-y\rangle}{\|x-y\|^{2}}:\|x-y\|=r\right\}.
			\end{equation*}
			If $V$ is strongly convex outside a compact set, then $\bar{\pi}$ is distant dissipative, for instance $V(x)=\norm{x}^{2+\delta}$ with $\delta\geq 0$.
			\item $k(\nabla\Psi^*(\cdot),\nabla\Psi^*(\cdot))$ is an inverse multiquadratic kernel, i.e., $k(\nabla\Psi^*(x), \nabla\Psi^*(y))=\left(c^{2}+\|x-y\|^{2}\right)^{\beta}$ for some $c>0$ and $\beta \in(-1,0)$. It is easy to check that \Cref{Al-ker-bound} is satisfied.
		\end{enumerate}
	\end{remark}

	\section{Conclusion}\label{sec:conclusion}
	
	We proved a descent lemma for minimizing the $\KL$ divergence with the MSVGD algorithm under $(L_0,L_1)$-smoothness condition.  Though in this paper we assume $k(\cdot,\cdot)$ is scalar, our analysis can also be applied on matrix kernel.
	The result implies $\cO(1/n)$ convergence under Mirrored Stein Fisher information. The results in this note match the ones from \cite{sun2022convergence}, this is not surprising since MSVGD is equivalent to SVGD for target distribution $\nabla\Psi_{\#}\pi$.
	The proved results remain on the theoretical level, the algorithm we analyzed in the infinite particle regime which is not implementable on a computer. There is paper analysing the finite particle regime \cite{shi2021sampling}, however they assumed a $L$-smooth $V$ and their result is asymptotic. A relevant analysis of the finite particle
	regime still lacks certain clarity and is yet to be developed.

	\clearpage
	\bibliographystyle{plainnat}
	\bibliography{bibliography,math1}
	\clearpage
	\appendix
	
	\tableofcontents
	\clearpage
	\part*{Appendix}\label{appendix}
	For simplicity, in this appendix we will denote $k_{\Psi}(\cdot,\cdot):=k(\nabla\Psi^*(\cdot),\nabla\Psi^*(\cdot))$.
	Since  $\KL(\nabla\Psi_{\#}\mu\mid \nabla\Psi_{\#}\pi)=\KL(\mu\mid\pi)$ for diffeomorphism $\nabla\Psi$, so it is enough to prove the results using  $\KL(\bar{\mu}\mid\bar{\pi})$, where $\bar{\mu}=\nabla\Psi_{\#}\mu,\bar{\pi}= \nabla\Psi_{\#}\pi$.

	\section{Proof of \Cref{propl-descent-lemma}}\label{subsec:1}
	We follow the proof procedure in \cite{sun2022convergence}. First due to \Cref{lem:oonnee}, we have $\norm{g_{\bar{\mu}_n}}_{\cH}^2=I_{\Stein}(\mu_n\mid\pi)$.  For the sake of brevity we will omit the index when referring to $g_{{\bar{\mu}}_n}$ and will write simply $g$. 
	Let us define $\phi_{t} := I - t g$ and  $\bar{\rho}_t:= {\phi_{t}}_{\#} {\bar{\mu}}_n$ for every $t \in[0, \gamma]$.  
	Then, applying Taylor formula to the function \begin{equation}\label{eq:g9y980fd8}\varphi(t):=\mathrm{KL}\left(\bar{\rho}_{t} \mid \bar{\pi}\right)\end{equation} we have the following
	\begin{equation}\label{eq:phi-taylor}
		\varphi(\gamma)=\varphi(0)+\gamma \varphi^{\prime}(0)+\int_{0}^{\gamma}(\gamma-t) \varphi^{\prime \prime}(t) {\rm d} t.
	\end{equation}
	By the definition of the MSVGD iteration, we have that $\varphi(0)={\KL}\left({\bar{\mu}}_{n} \mid \bar{\pi}\right)$ and $\varphi(\gamma)={\KL}\left({\bar{\mu}}_{n+1} \mid \bar{\pi}\right)$. Let us now compute the term of \eqref{eq:phi-taylor} corresponding to the first order derivative.{ In order to do that we show that $\phi_t$ is a diffeomorphism.}
	\begin{lemma}\label{lem:diff}
		Suppose that Assumption \ref{Al-ker-bound} holds. Then, for any $x \in \R^d$ and  $h \in \mathcal{H}$, 
		\begin{equation*} 
			\|J h(x)\|_{{\rm HS}} \leq \frac{B_2d}{{\K}}\|h\|_{\mathcal{H}}.
		\end{equation*}
	\end{lemma}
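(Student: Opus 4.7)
}

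The plan is to exploit the reproducing property componentwise, combined with the chain rule to transfer the derivative from $\R^d$ to $\Omega$, and to use the strong convexity of $\Psi$ to control the Hessian of $\Psi^*$. Throughout I treat $h\in\cH$ as $h=\tilde h\circ\nabla\Psi^*$ with $\tilde h=(\tilde h_1,\dots,\tilde h_d)^\top$ where each $\tilde h_i\in\cH_0$ (an RKHS with kernel $k$ on $\Omega$); the RKHS isometry gives $\|h\|_\cH=\|\tilde h\|_\cH$.

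First, I would apply the chain rule to write
\begin{equation*}
Jh(x) = J\tilde h(\theta)\,\nabla^2\Psi^*(x), \qquad \theta:=\nabla\Psi^*(x),
\end{equation*}
and then use the submultiplicativity identity $\|AB\|_{\HS}\le \|A\|_{\op}\|B\|_{\HS}$ to get
\begin{equation*}
\|Jh(x)\|_{\HS} \leq \|J\tilde h(\theta)\|_{\op}\,\|\nabla^2\Psi^*(x)\|_{\HS}.
\end{equation*}
Because $\Psi$ is $K$-strongly convex, $\|\nabla^2\Psi^*(x)\|_{\op}\le 1/K$, and so $\|\nabla^2\Psi^*(x)\|_{\HS}\le \sqrt d \, \|\nabla^2\Psi^*(x)\|_{\op}\le \sqrt d/K$.

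Next, I would bound $\|J\tilde h(\theta)\|_{\op}$ by $\|J\tilde h(\theta)\|_{\HS}$. For each entry, the reproducing property gives $\partial_{\theta_j}\tilde h_i(\theta)=\langle \tilde h_i,\partial_{\theta_j}k(\theta,\cdot)\rangle_{\cH_0}$, so by Cauchy--Schwarz and \Cref{Al-ker-bound},
\begin{equation*}
\bigl(\partial_{\theta_j}\tilde h_i(\theta)\bigr)^2 \leq \|\tilde h_i\|_{\cH_0}^2 \,\|\partial_{\theta_j}k(\theta,\cdot)\|_{\cH_0}^2 \leq B_2^2\,\|\tilde h_i\|_{\cH_0}^2.
\end{equation*}
Summing over $i,j\in[d]$ yields $\|J\tilde h(\theta)\|_{\HS}^2 \le dB_2^2\,\|\tilde h\|_\cH^2$, hence $\|J\tilde h(\theta)\|_{\op}\le \sqrt d\, B_2\,\|\tilde h\|_\cH$.

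Combining the two bounds gives
\begin{equation*}
\|Jh(x)\|_{\HS} \leq \sqrt d\, B_2\,\|\tilde h\|_\cH \cdot \frac{\sqrt d}{K} = \frac{B_2 d}{K}\,\|h\|_\cH,
\end{equation*}
which is the claim. The only mild subtlety is recognizing that saturating both $\|J\tilde h\|_{\HS}\le\sqrt d B_2\|\tilde h\|_\cH$ and $\|\nabla^2\Psi^*\|_{\HS}\le\sqrt d/K$ at once requires pairing the operator norm of one factor with the Hilbert--Schmidt norm of the other; bounding both factors by their HS norms directly would give a worse constant, while bounding both by operator norms would give only $\sqrt d B_2/K$ and miss a factor of $\sqrt d$. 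I do not expect a serious obstacle beyond making this matrix-norm choice cleanly and checking that the identification $h=\tilde h\circ\nabla\Psi^*$ preserves the RKHS norm so that the final bound is stated in terms of $\|h\|_\cH$ as claimed.
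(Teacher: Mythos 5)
Your proof is correct and arrives at the paper's constant $\frac{B_2 d}{\K}$, but by a genuinely different decomposition. The paper works entirely in the pullback RKHS on $\R^d$ with kernel $k_{\Psi}(x,y):=k(\nabla\Psi^*(x),\nabla\Psi^*(y))$: it writes $\partial_{x_j}h_i(x)=\langle \partial_{x_j}k_{\Psi}(x,\cdot),h_i\rangle_{\cH_0}$, applies Cauchy--Schwarz entrywise, and then needs a separate, somewhat laborious computation of $\|\nabla k_{\Psi}(x,\cdot)\|_{\cH}^2$ via a double limit expressing $\langle \partial_{x_i}k_{\Psi}(x,\cdot),\partial_{y_i}k_{\Psi}(y,\cdot)\rangle_{\cH_0}$ through $\nabla^2\Psi^*$ and the mixed kernel derivatives, followed by the matrix inequality $|a_{ij}|\le \frac{a_{ii}+a_{jj}}{2}$ for $A=\nabla^2\Psi^*\nabla^2\Psi^*$ and a trace bound. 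You instead apply the chain rule once at the level of $h=\tilde h\circ\nabla\Psi^*$, factor $Jh(x)=J\tilde h(\theta)\,\nabla^2\Psi^*(x)$, and bound the two factors separately: the first by the reproducing property in the original RKHS on $\Omega$ (exactly where \Cref{Al-ker-bound} lives), the second by the strong convexity of $\Psi$. This bypasses the kernel-derivative computation entirely and makes the origin of each factor of $d$ transparent; the isometry $\|h\|_{\cH}=\|\tilde h\|_{\cH}$ you invoke is the standard pullback-by-a-bijection fact and is implicitly used by the paper too. One small correction to your closing remark: pairing Hilbert--Schmidt norms on both factors does \emph{not} give a worse constant, since the only available control on $\|J\tilde h(\theta)\|_{\op}$ is through $\|J\tilde h(\theta)\|_{\HS}$ anyway, so the $\op$--$\HS$ versus $\HS$--$\HS$ choice is not actually load-bearing here; the bound $\frac{B_2d}{\K}\|h\|_{\cH}$ comes out the same either way.
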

	Applying the lemma to the function $g$, we obtain the following:
	\begin{equation}\label{eq:ineq-op-HS}
		\|t J g(x)\|_{op} \leq \|t J g(x)\|_{{\rm HS}} \leq t \frac{B_2d}{{\K}}\|g\|_{\mathcal{H}} < 1.
	\end{equation}
	The latter inequality is due to the condition on the step-size $\gamma$~($\gamma\leq\frac{(\alpha-1){\K}}{\alpha B_2d\norm{g}_{\cH}}$). The inequality \eqref{eq:ineq-op-HS} implies
	that $\phi_{t}$ is a diffeomorphism. 
	Therefore, $\bar{\rho}_{t}$ admits a density given by the formula of transformation of probability densities:
	\begin{equation*}
		\bar{\rho}_{t}(x)=\left|J \phi_{t}\left(\phi_{t}^{-1}(x)\right)\right|^{-1} {\bar{\mu}}_{n}\left(\phi_{t}^{-1}(x)\right).
	\end{equation*}
	Changing the variable of integration and applying the transfer lemma we get the following formula for $\varphi(t)$:
	\begin{equation*}
		\begin{aligned}
			\varphi(t) &=\int \log \left(\frac{\bar{\rho}_{t}(y)}{\bar{\pi}(y)}\right) \bar{\rho}_{t}(y) \rmd y \\
			&=\int \log \left(\frac{{\bar{\mu}}_{n}(x)\left|J \phi_{t}(x)\right|^{-1}}{\bar{\pi}\left(\phi_{t}(x)\right)}\right) {\bar{\mu}}_{n}(x) \rmd x\\
			&=\int  \left[ \log \left({\bar{\mu}}_{n}(x)\right) + \log \left(\left|J \phi_{t}(x)\right|^{-1}\right) - \log \left({\bar{\pi}\left(\phi_{t}(x)\right)}\right)
			\right]  {\bar{\mu}}_{n}(x) \rmd x.
		\end{aligned}
	\end{equation*}
	Let us then compute the time derivative of $\varphi(t)$. Taking the derivative inside and applying Jacobi's formula for matrix determinant differentiation we obtain the following equality: 
	\begin{equation*}
		\varphi^{\prime}(t)=-\int \operatorname{tr}\left(J \phi_{t}(x)^{-1} \frac{d J \phi_{t}(x)}{d t}\right) {\bar{\mu}}_{n}(x) 
		\rmd x
		- \int\left\langle\nabla \log \bar{\pi}\left(\phi_{t}(x)\right), \frac{d \phi_{t}(x)}{d t}\right\rangle {\bar{\mu}}_{n}(x) \rmd x.
	\end{equation*}
	By definition, $d\phi_t / dt = g$. Therefore, we can use the explicit expression of $\phi_{t}$ to write:
	\begin{equation*}
		\varphi^{\prime}(t)=\int \operatorname{tr}\left(J \phi_{t}(x)^{-1} J g(x)\right) {\bar{\mu}}_{n}(x) \rmd x
		+ \int \left \langle \nabla V\left(\phi_{t}(x)\right), g(x)\right\rangle {\bar{\mu}}_{n}(x) \rmd x .
	\end{equation*}
	The Jacobian at time $t=0$ is simply equal to the identity matrix since $\phi_{0}={\rm I}_d$.   It follows that $\operatorname{tr}\left(J \phi_{0}(x)^{-1} J g(x)\right)=$ $\operatorname{tr}(J g(x))=\operatorname{div}(g)(x)$ by the definition of the divergence operator. Using integration by parts:
	\begin{equation}\label{eq:phi'(0)}
		\begin{aligned}
			\varphi^{\prime}(0) &=-\int[-\operatorname{div}(g)(x)-\langle\nabla \log \bar{\pi}(x), g(x)\rangle] {\bar{\mu}}_{n}(x) \rmd x \\
			&=-\int\left\langle\nabla \log \left(\frac{{\bar{\mu}}_{n}}{\bar{\pi}}\right)(x), g(x)\right\rangle {\bar{\mu}}_{n}(x) \rmd x\\
			&= -\left\langle P_{\bar{\mu}}\nabla \log \left(\frac{{\bar{\mu}}_{n}}{\bar{\pi}}\right)(x), g(x) \right\rangle_{\cH}\\
			&= - \|g\|_{\cH}^2 .
		\end{aligned}
	\end{equation}
	Next, we calculate the term of \eqref{eq:phi-taylor} that contains the second derivative. First,
	\begin{equation*}
		\varphi^{\prime \prime}(t)
		=\int\left[{\operatorname{tr}\left(\left(J g(x)\left(J \phi_{t}(x)\right)^{-1}\right)^{2}\right)}
		+\left\langle g(x), \nabla^2 V\left(\phi_{t}(x)\right) g(x)\right\rangle\right] {\bar{\mu}}_{n}(x) \rmd x.
	\end{equation*}
	From the definition of the function $\phi_t$, we know that $J \phi_{t}(x) = ({\rm I}_d + t Jg)(x)$. 
	Thus, $J \phi_{t}^{-1}$ and $Jg$ commute and 
	$\operatorname{tr}\left(\left(J g(x)\left(J \phi_{t}(x)\right)^{-1}\right)^{2}\right)
	=\left\|J g(x)\left(J \phi_{t}(x)\right)^{-1}\right\|_{\rm H S}^{2}$. 
	
	Overall we have the following:
	\begin{equation*}
		\varphi^{\prime \prime}(t)
		=\underbrace{\int \left\|J g(x)\left(J \phi_{t}(x)\right)^{-1}\right\|_{\rm H S}^{2} {\bar{\mu}}_{n}(x) \rmd x}_
		{:=\psi_1(t)}
		+\underbrace{\int\left\langle g(x), H_{V}\left(\phi_{t}(x)\right) g(x)\right\rangle{\bar{\mu}}_{n}(x) \rmd x}_{:=\psi_2(t)} .
	\end{equation*}
	First, we bound $\psi_{1}(t)$. Cauchy-Schwarz implies that
	\begin{equation*}
		\left\|J g(x)\left(J \phi_{t}(x)\right)^{-1}\right\|_{\rm H S}^{2} 
		\leq \|J g(x)\|_{\rm H S}^{2}\left\|\left(J \phi_{t}(x)\right)^{-1}\right\|_{o p}^{2}.
	\end{equation*}
	From \Cref{lem:diff}, we have  $\|J g(x)\|_{\rm H S} \leq \frac{B_2d}{{\K}}\|g\|_{\cH}$. 
	To bound the second term, let us recall that $\phi_t = I - tg$ and that $t \leq \gamma$. Thus, the following bound is true:
	\begin{equation*}
		\begin{aligned}
			\left\|\left(J \phi_{t}(x)\right)^{-1}\right\|_{o p} 
			& = \left\|\big( ( {\rm I}_d - t J g)(x)\big)^{-1}\right\|_{o p} \leq \sum_{k=0}^{\infty}\|t J g(x)\|_{o p}^{k} \leq \sum_{k=0}^{\infty}\|\gamma J g(x)\|_{\rm H S}^{k}.
		\end{aligned}
	\end{equation*}
	Recalling \eqref{eq:gamma-cond} and combining it with \Cref{lem:diff} we obtain 
	\begin{equation*}
		\begin{aligned}
			\left\|\left(J \phi_{t}(x)\right)^{-1}\right\|_{o p} \leq \sum_{k=0}^{\infty}(\gamma \frac{B_2d}{{\K}} \|g\|_{\cH})^{k} \leq \sum_{k=0}^{\infty} \left(\frac{\alpha-1}{\alpha}\right)^k  = \alpha.
		\end{aligned}
	\end{equation*}
	
	Summing up, we have that
	\begin{equation}\label{eq:psi1-bound}
		\psi_1(t) \leq \frac{\alpha^2 B_2^2d^2}{{\K}^2}\|g\|_{\cH}^2.
	\end{equation}
	
	Next, we bound $\psi_{2}$. By definition
	\begin{align*}
		\psi_2(t) &= \EE_{\bX\sim{\bar{\mu}}_n} \left[\left\langle g(\bX), \nabla^2 V\left(\phi_{t}(\bX)\right) g(\bX)\right\rangle \right] \leq \EE_{\bX\sim{\bar{\mu}}_n} \left[ \|\nabla^2{V}\left(\phi_t(\bX)\right)\|_{op} \|g(\bX)\|_2^2 \right].
	\end{align*}
	Let us bound the norm of $g(x)$. The reproduction property of the RKHS yields the following:
	\begin{equation}\label{eq:norm2-H}
		\|g(x)\|_2^{2}=\sum_{i=1}^{d}\left\langle k(x, .), g_{i}\right\rangle_{\mathcal{H}_{0}}^{2} \leq\|k(x, .)\|_{\mathcal{H}_{0}}^{2}\|g\|_{\mathcal{H}}^{2} \leq B_1^{2}\|g\|_{\mathcal{H}}^{2}.
	\end{equation}
	Therefore,
	\begin{equation}\label{eq:psi2-initial-bound}
		\psi_2(t) \leq  B_1^{2}\|g\|_{\mathcal{H}}^{2}\EE_{\bX\sim{\bar{\mu}}_n} \left[ \|\nabla^2{V}\left(\phi_t(\bX)\right)\|_{op} \right].
	\end{equation}
	Let us bound $\EE_{\bX\sim{\bar{\mu}}_n} \left[ \|H_{V}\left(\phi_t(\bX)\right)\|_{op} \right]$. \Cref{Al-l0l1} implies the following inequality:
	\begin{equation*}
		\left\| \nabla^2V (\phi_t(x))\right\|_{op} \leq L_{0}+L_{1}\|\nabla V(\phi_t(x))\|,
	\end{equation*}
	for every $x \in \RR^d$.
	
	To bound the term $\|\nabla V(\phi_t(x))\|$, we introduce the following lemma from \cite{sun2022convergence} without proof.
	\begin{lemma}\label{lem:grad-v}
		Let $V$ be an $\left(L_{0}, L_{1}\right)$-smooth function and $\Delta>0$ be a constant. 
		For any $x,x^{+} \in \RR^d$ such that $\left\|x^{+}-x\right\| \leq \Delta $, we have 
		\begin{equation*}
			\left\|\nabla V\left(x^{+}\right)\right\| \leq 
			\frac{L_0 }{ L_1 } \left(\exp(\Delta L_1) - 1\right)  +  \|\nabla V(x)\|\exp(\Delta L_1).
		\end{equation*}
	\end{lemma}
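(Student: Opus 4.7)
The plan is to reduce the bound to a one-dimensional Grönwall-type estimate along the line segment joining $x$ and $x^{+}$.

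First, I would parametrize the segment by $\gamma(s) := x + s(x^{+} - x)$ for $s \in [0,1]$ and define the scalar function $f(s) := \norm{\nabla V(\gamma(s))}$. The fundamental theorem of calculus applied componentwise gives
\begin{equation*}
\nabla V(\gamma(s)) - \nabla V(x) = \int_{0}^{s} \nabla^{2} V(\gamma(\tau))\, (x^{+} - x)\, d\tau.
\end{equation*}
Taking norms on both sides, bounding the integrand by $\norm{\nabla^{2} V(\gamma(\tau))}_{op}\,\norm{x^{+} - x}$, using the hypothesis $\norm{x^{+} - x} \le \Delta$, and finally invoking \Cref{Al-l0l1} to replace $\norm{\nabla^{2} V(\gamma(\tau))}_{op}$ by $L_{0} + L_{1} f(\tau)$ delivers the integral inequality
\begin{equation*}
f(s) \le f(0) + \Delta \int_{0}^{s} \bigl(L_{0} + L_{1} f(\tau)\bigr)\, d\tau.
\end{equation*}

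Next, I would apply the integral form of Grönwall's inequality. Denoting the right-hand side by $g(s)$, this function is absolutely continuous and satisfies $g'(s) = \Delta L_{0} + \Delta L_{1} f(s) \le \Delta L_{0} + \Delta L_{1} g(s)$. Multiplying through by the integrating factor $e^{-\Delta L_{1} s}$ and integrating from $0$ to $s$ yields
\begin{equation*}
g(s) \le \norm{\nabla V(x)}\, e^{\Delta L_{1} s} + \frac{L_{0}}{L_{1}}\bigl(e^{\Delta L_{1} s} - 1\bigr).
\end{equation*}
Setting $s = 1$ and using $\norm{\nabla V(x^{+})} = f(1) \le g(1)$ produces exactly the claimed bound.

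The one mildly delicate point is that $f(s)$ need not be differentiable at points where $\nabla V(\gamma(s)) = 0$, so a naive ODE argument on $f$ itself is not directly available. I would sidestep this by never differentiating $f$: the 1-Lipschitz property of the norm is already enough to pass from the vector-valued fundamental theorem of calculus to the scalar integral inequality above, and Grönwall is then applied only to the smooth dominating function $g$. Once the integral form is established, the remainder is a standard calculation with an integrating factor.
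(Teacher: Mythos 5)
Your argument is correct: the fundamental-theorem-of-calculus step along the segment, the reduction to the integral inequality $f(s)\le f(0)+\Delta\int_0^s(L_0+L_1 f(\tau))\,d\tau$, and the Gr\"onwall step applied to the dominating function $g$ (rather than to the possibly non-differentiable $f$) together yield exactly the stated bound at $s=1$. The paper itself imports this lemma from its reference without proof, and your derivation is the standard one used there, so there is nothing to flag beyond the trivial remark that the case $L_1=0$ is read as the limit $\frac{L_0}{L_1}(e^{\Delta L_1}-1)\to L_0\Delta$, which your integral inequality also covers directly.
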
 
	We will apply \Cref{lem:grad-v} to $\phi_t(x)$ and $\phi_0(x)$. By definition, $\phi_t(x) - \phi_0(x) = t g(x)$ and
	according to inequality \eqref{eq:norm2-H},
	\begin{equation*}
		\|\phi_t(x) - \phi_0(x)\|\leq t B_1\| g\|_{\cH}.
	\end{equation*}
	Thus, using \Cref{lem:grad-v} for $x = \phi_0(x) $, $x^+ = \phi_t(x)$ and $\Delta = tB_1\|g\|_{\cH}$, we obtain the following:
	\begin{equation}\label{eq:bound-Hess-phi}
		\begin{aligned}
			\left\| \nabla^2V (\phi_t(x))\right\|_{op} 
			&\leq  L_0 + L_1 \left(\frac{L_0}{L_1}\big(\exp(t B_1 \|g\|_{\cH} L_1) - 1\big) 
			+ \left\|\nabla V\left(\phi_0(x) \right)\right\| \exp(t B_1 \|g\|_{\cH}L_1)\right) \\
			&= \big(L_{0} + L_1\left\|\nabla V\left(x\right)\right\|\big)\exp (t B_1 \|g\|_{\cH} L_1),
		\end{aligned}
	\end{equation}
	where the last equality is due to $\phi_0(x) = x$.
	Combining \eqref{eq:psi2-initial-bound} and \eqref{eq:bound-Hess-phi} we obtain
	\begin{equation*}\label{eq:psi2-final-bound}
		\psi_2(t)\leq   B_1^2 \|g\|_{\cH}^2 \left(L_{0} + L_1 \EE_{\bX\sim{\bar{\mu}}_n}\big[\left\|\nabla V (\bX) \right\|\big]\right) 
		\exp (t B_1 \|g\|_{\cH} L_1).
	\end{equation*}
	Summing up, the bounds on $\psi_1$ and $\psi_2$ yield the following inequality:
	\begin{equation*}
		\begin{aligned}
			\varphi^{\prime \prime}(t)& \leq  \|g\|_{\cH}^2 \Big[ \frac{\alpha^2B_2^2d^2}{{\K}^2} 
			+ B_1^2\Big(L_{0} + L_1 \EE_{\bX\sim{\bar{\mu}}_n}\big[\left\|\nabla V (\bX) \right\|\big]\Big) 
			\exp (t B_1 \|g\|_{\cH} L_1)\Big]\\
			&=  \|g\|_{\cH}^2 \Big[\frac{ \alpha^2B_2^2 d^2}{{\K}^2}
			+ B_1^2A_n
			\exp (t B_1 \|g\|_{\cH} L_1)\Big],
		\end{aligned}
	\end{equation*}
	recall that by definition $A_n = \big(L_{0} + L_1 \EE_{\bX\sim{\bar{\mu}}_n}\big[\left\|\nabla V (\bX) \right\|\big]\big)$. Use the previous upper bound, we have 
	\begin{equation*}
		\begin{aligned}
			\int_{0}^{\gamma}(\gamma-t)\varphi^{\prime\prime}(t)dt&\leq \|g\|_{\cH}^2 \int_{0}^{\gamma}(\gamma-t)\Big[ \frac{\alpha^2B_2^2d^2}{{\K}^2}
			+B_1^2A_n
			\exp (t B_1 \|g\|_{\cH}L_1 )\Big]dt\\
			&=\frac{\gamma^2}{2} \|g\|_{\cH}^2\frac{ \alpha^2 B_2^2d^2}{{\K}^2}
			+ B_1^2 \|g\|_{\cH}^2A_n \frac{\exp (\gamma B_1 \|g\|_{\cH}L_1 )-\gamma _1B\|g\|_{\cH}L_1-1}{B_1^2\|g\|^2_{\cH}L_1^2}.
		\end{aligned}
	\end{equation*}
	One can check that $\exp(t) - t - 1 \leq (e-1)t^2/2$, when $t \in [0,1]$, since $\gamma B_1 \|g\|_{\cH}L_1 < 1$, we deduce that 
	\begin{equation}
		\int_{0}^{\gamma}(\gamma-t)\varphi^{\prime\prime}(t)dt\leq \|g\|^2_{\cH}\left(\frac{\gamma^2\alpha^2B_2^2d^2}{2{\K}^2}+\frac{e-1}{2}B_1^2A_n\gamma^2\right),
	\end{equation}
	combine with \eqref{eq:phi'(0)}, we finally have 
	\begin{equation*}
		\begin{aligned}
			\varphi(\gamma) - \varphi(0) &\leq - \gamma \|g\|_{\cH}^2
			+\gamma^2\|g\|^2_{\cH}\left(\frac{\alpha^2B_2^2d^2}{2{\K}^2}+\frac{e-1}{2}B_1^2A_n\right)\\
			&=-\gamma\left(1-\frac{1}{2}\gamma \left(\frac{\alpha^2B_2^2d^2}{{\K}^2}+(e-1)B_1^2A_n\right)\right)\|g\|_{\cH}^2 \\
			&\overset{(i)}{\leq}-\frac{\gamma}{2}\|g\|_{\cH}^2,
		\end{aligned}
	\end{equation*}
	$(i)$ is since we choose $\gamma\leq\frac{{\K}^2}{\left(\alpha^2B_2^2d^2+{\K}^2(e-1)B_1^2A_n\right)}$.
	
	Finally, by \Cref{lem:oonnee} $\|g\|_{\cH}^2 = I_{\Stein}({{\mu}}_n \mid {\pi})$.
	This concludes the proof of the \Cref{propl-descent-lemma}.
	\begin{remark}
		One may notice that we did not use the exact formula of $g$ except for \eqref{eq:phi'(0)}. In fact the proposition remains true for a general $g \in \cH$ with a slight change. If we skip the last equality in \eqref{eq:phi'(0)} and repeat the other steps, then we get the following:
		\begin{equation}
			\varphi(\gamma) - \varphi(0) \leq  
			- \gamma \left\langle P_{\bar{\mu}}\nabla \log \left(\frac{{\bar{\mu}}_{n}}{\bar{\pi}}\right)(x), g(x) \right\rangle_{\cH}
			+\frac{\gamma^2}{2}\|g\|^2_{\cH}\left(\frac{\alpha^2B_2^2d^2}{{\K}^2}+{(e-1)B_1^2A_n}\right).
		\end{equation}
	\end{remark}

	\section{ Lemmas}
	
	The following lemma is from \cite[Corollary 2.3.]{bolley2005weighted}.
	\begin{lemma}\label{bolley}
		Let $\mathcal{X}$ be a measurable space equipped with a measurable distance $\bf d$, let $p \geqslant 1$ and let $\nu$ be a probability measure on $\mathcal{X}$. Assume that there exist $x_{0} \in \mathcal{X}$ and $s>0$ such that $\int \exp({s {\bf d}\left(x_{0}, x\right)^{p}}) d \nu(x)$ is finite. Then
		$$
		\forall \mu \in \cP(\mathcal{X}), \quad W_{p}(\mu, \nu) \leqslant C_{\nu,p}\left[\KL(\mu \mid \nu)^{\frac{1}{p}}+\left(\frac{\KL(\mu \mid \nu)}{2}\right)^{\frac{1}{2 p}}\right],
		$$
		where
		\begin{equation}\label{eq:bolley1}
			C_{\nu,p}:=2 \inf _{x_{0} \in X, s>0}\left(\frac{1}{s}\left(\frac{3}{2}+\log \int \exp({s {\bf d}\left(x_{0}, x\right)^{p}}) d \nu(x)\right)\right)^{\frac{1}{p}}<+\infty.
		\end{equation}
	\end{lemma}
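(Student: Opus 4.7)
The final statement is the Bolley--Villani transportation-entropy inequality: a finite exponential moment of a $p$-th power of the distance under $\nu$ implies a control of $W_p(\mu,\nu)$ by a mixture of $\KL(\mu\mid\nu)^{1/p}$ and $\KL(\mu\mid\nu)^{1/(2p)}$. The two exponents reflect complementary regimes: a direct variational control that is linear in $\KL$ and dominates when $\KL$ is large, and a Pinsker-type refinement behaving like $\sqrt{\KL(\mu\mid\nu)}$ that dominates when $\KL$ is small. My plan is to obtain both at once from a single weighted total-variation inequality, combined with an elementary coupling that reduces $W_p^p(\mu,\nu)$ to a weighted TV functional.

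Step one would be to prove a weighted TV / KL inequality of the form
\begin{equation*}
\int \phi \, d|\mu-\nu| \;\leq\; \frac{1}{s}\Bigl(\tfrac{3}{2} + \log\!\int e^{s\phi}\, d\nu\Bigr)\bigl(\KL(\mu\mid\nu) + \sqrt{2\,\KL(\mu\mid\nu)}\bigr)
\end{equation*}
for every nonnegative measurable $\phi$ and every $s>0$. I would derive this by combining the Donsker--Varadhan variational formula $\int \phi\, d\mu - \KL(\mu\mid\nu) \leq \log \int e^{\phi}\, d\nu$ (applied at scale $s$) with the Csisz\'ar--Kullback inequality $\|\mu-\nu\|_{TV}\leq \sqrt{2\,\KL(\mu\mid\nu)}$. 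Concretely, I would truncate the density ratio $d\mu/d\nu$ at a level $M$, bound the contribution below $M$ by Pinsker (yielding the $\sqrt{\KL}$ term) and the contribution above $M$ by Donsker--Varadhan (yielding the linear $\KL$ term), and then optimize over $M$. The elementary bound $x(\log x -1)\geq -1$ supplies the universal constant $3/2$.

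Step two would reduce $W_p^p$ to this weighted TV. I would use the explicit coupling $\gamma$ that keeps the common mass $\mu\wedge\nu$ on the diagonal and matches the residuals through a product coupling. Applying $d(x,y)^p \leq 2^{p-1}(d(x_0,x)^p + d(x_0,y)^p)$ under $\gamma$ yields
\begin{equation*}
W_p^p(\mu,\nu) \;\leq\; 2^{p-1} \int d(x_0,\cdot)^p \, d|\mu-\nu|.
\end{equation*}
Substituting $\phi(x) = d(x_0, x)^p$ into the inequality of step one, using the hypothesis $\int e^{s d(x_0,\cdot)^p}\, d\nu < \infty$ to make the prefactor finite, and finally taking $p$-th roots together with the subadditivity $(a+b)^{1/p}\leq a^{1/p}+b^{1/p}$ would produce the sum $\KL(\mu\mid\nu)^{1/p} + (\KL(\mu\mid\nu)/2)^{1/(2p)}$, with the constant $C_{\nu,p}$ emerging as the infimum of the prefactor over $(x_0, s)$.

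I expect the main obstacle to be the weighted TV inequality in step one: producing both the $\KL$ and $\sqrt{\KL}$ contributions with the clean prefactor displayed above, in a way that survives the truncation optimization over $M$, is where all of the real work concentrates. Once that inequality is in hand, the coupling argument and the $p$-th root manipulation are essentially algebraic; the interpolation between the small-KL (Pinsker) and large-KL (variational) regimes is what is delicate, and is precisely what allows the final bound to be sharp simultaneously in both.
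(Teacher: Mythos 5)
This lemma is not proved in the paper at all: it is imported verbatim as \cite[Corollary 2.3]{bolley2005weighted}, so there is no in-paper argument to compare against. Your sketch correctly reproduces the strategy of the original Bolley--Villani proof: a weighted Csisz\'ar--Kullback--Pinsker inequality obtained by splitting the density ratio $d\mu/d\nu$ at a truncation level, controlling the large part via the Donsker--Varadhan/Young-type inequality $uv\leq (u\log u-u+1)+(e^{v}-1)$ and the small part via Pinsker, followed by the suboptimal coupling giving $W_p^p(\mu,\nu)\leq 2^{p-1}\int \mathbf{d}(x_0,\cdot)^p\, d|\mu-\nu|$ and the subadditivity of $t\mapsto t^{1/p}$; the bookkeeping with the factor $2^{(p-1)/p}$ does produce the stated constant $C_{\nu,p}$ with its leading factor $2$. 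The only caveat is that your Step one is asserted rather than proved, and it is exactly Theorem 2.1 of the cited reference, i.e.\ the entire substance of the result; as you yourself note, that is where all the work lives, so as written the proposal is a correct reduction to the key lemma of Bolley--Villani rather than a self-contained proof.
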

	
%	The following lemma is an integral form of Gr{\"o}nwall inequality from \cite[Chapter II.]{amann2011ordinary}, which is used in the proof of \Cref{lem:grad-v}.
	
	%	\begin{lemma}[Gr{\"o}nwall Inequality]\label{lem:gronwalllll}
		%		Assume $\phi, B:[0, T] \rightarrow \mathbb{R}$ are bounded non-negative measurable function and $C:[0, T] \rightarrow \mathbb{R}$ is a non-negative integrable function with the property that
		%		\begin{equation}
			%			\label{eq:grrrr1}
			%			\phi(t) \leq B(t)+\int_{0}^{t} C(\tau) \phi(\tau) d \tau \quad \text { for all } t \in[0, T]
			%		\end{equation}
		%		Then
		%		\begin{equation}
			%			\label{eq:grrrr2}
			%			\phi(t) \leq B(t)+\int_{0}^{t} B(s) C(s) \exp \left(\int_{s}^{t} C(\tau) d \tau\right) d s \quad \text { for all } t \in[0, T].
			%		\end{equation}
		%	\end{lemma}
	
	\subsection{Proof of Lemmas}
		\begin{proof}[\textbf{Proof of \Cref{lem:oonnee}}]
		
		By the formula of transformation of probability densities, we first have \newline$\bar{\mu}(x)=\mu(\nabla\Psi^*(x))|\det\nabla^2\Psi(\nabla\Psi^*(x))|^{-1}$ and $\bar{\pi}(x)=\pi(\nabla\Psi^*(x))
		|\det\nabla^2\Psi(\nabla\Psi^*(x))|^{-1}$, so 
		\begin{equation}
			\log(\frac{\mu}{\pi})(\theta)=\log(\frac{\bar{\mu}}{\bar{\pi}})(x),
		\end{equation}
		where $x=\nabla\Psi(\theta)$. Since $\mu(\theta)=\left(\nabla\Psi^*_{\#}\bar{\mu}\right)(\theta)$, we have 
		\begin{equation}
			\begin{aligned}
				&\int_{\theta\in\Omega}k(\theta,\cdot)\nabla^2\Psi(\theta)^{-1}\nabla_{\theta}\log(\frac{\mu}{\pi})(\theta)d\mu(\theta)\\
				&=\int_{\theta\in\Omega}k(\theta,\cdot)\nabla^2\Psi(\theta)^{-1}\nabla_{\theta}\log(\frac{\mu}{\pi})(\theta)d\left(\nabla\Psi^*_{\#}\bar{\mu}\right)(\theta)\\
				&=\int_{x\in\R^d}k(\nabla\Psi^*(x),\cdot)\nabla^2\Psi(\nabla\Psi^*(x))^{-1}\nabla_{\theta}\log(\frac{\mu}{\pi})(\nabla\Psi^*(x))d\bar{\mu}(x)\\
				&=\int_{x\in\R^d}k(\nabla\Psi^*(x),\cdot)\nabla^2\Psi(\nabla\Psi^*(x))^{-1}\nabla_{\theta}\log(\frac{\bar{\mu}}{\bar{\pi}})(x)d\bar{\mu}(x)\\
				&=\int_{x\in\R^d}k(\nabla\Psi^*(x),\cdot)\nabla^2\Psi(\nabla\Psi^*(x))^{-1}\nabla^2\Psi(\nabla\Psi^*(x))\nabla_{x}\log(\frac{\bar{\mu}}{\bar{\pi}})(x)d\bar{\mu}(x)\\
				&=\int_{x\in\R^d}k(\nabla\Psi^*(x),\cdot)\nabla_{x}\log(\frac{\bar{\mu}}{\bar{\pi}})(x)d\bar{\mu}(x).
			\end{aligned}
		\end{equation}
		Similarly, we have
		\begin{equation}
			\begin{aligned}
				&\int_{\theta\in\Omega}\int_{\theta'\in\Omega}k(\theta,\theta')\inner{\nabla^2\Psi(\theta)^{-1}\nabla_{\theta}\log(\frac{\mu}{\pi})(\theta)}{\nabla^2\Psi(\theta')^{-1}\nabla_{\theta'}\log(\frac{\mu}{\pi})(\theta')}d\mu(\theta)d\mu(\theta')\\
				&=\int_{\theta\in\Omega}\int_{\theta'\in\Omega}k(\theta,\theta')\inner{\nabla^2\Psi(\theta)^{-1}\nabla_{\theta}\log(\frac{\mu}{\pi})(\theta)}{\nabla^2\Psi(\theta')^{-1}\nabla_{\theta'}\log(\frac{\mu}{\pi})(\theta')}d\left(\nabla\Psi^*_{\#}\bar{\mu}\right)(\theta)d\left(\nabla\Psi^*_{\#}\bar{\mu}\right)(\theta')\\
				&=\int_{x\in\R^d}\int_{y\in\R^d}k(\nabla\Psi^*(x),\nabla\Psi^*(y))\\
				&\qquad\inner{\nabla^2\Psi(\nabla\Psi^*(x))^{-1}\nabla_{\theta}\log(\frac{\mu}{\pi})(\nabla\Psi^*(x))}{\nabla^2\Psi(\nabla\Psi^*(y))^{-1}\nabla_{\theta'}\log(\frac{\mu}{\pi})(\nabla\Psi^*(y))}d\bar{\mu}(x)d\bar{\mu}(y)\\
				&=\int_{x\in\R^d}\int_{y\in\R^d}k(\nabla\Psi^*(x),\nabla\Psi^*(y))\inner{\nabla^2\Psi(\nabla\Psi^*(x))^{-1}\nabla_{\theta}\log(\frac{\bar{\mu}}{\bar{\pi}})(x)}{\nabla^2\Psi(\nabla\Psi^*(y))^{-1}\nabla_{\theta'}\log(\frac{\bar{\mu}}{\bar{\pi}})(y)}d\bar{\mu}(x)d\bar{\mu}(y)\\
				&=\int_{x\in\R^d}\int_{y\in\R^d}k(\nabla\Psi^*(x),\nabla\Psi^*(y))\\
				&\qquad\inner{
					\nabla^2\Psi(\nabla\Psi^*(x))^{-1}\nabla^2\Psi(\nabla\Psi^*(x))\nabla_{x}\log(\frac{\bar{\mu}}{\bar{\pi}})(x)}{
					\nabla^2\Psi(\nabla\Psi^*(y))^{-1}\nabla^2\Psi(\nabla\Psi^*(y))\nabla_{y}\log(\frac{\bar{\mu}}{\bar{\pi}})(y)}
				d\bar{\mu}(x)d\bar{\mu}(y)\\
				&=\int_{x\in\R^d}\int_{y\in\R^d}k(\nabla\Psi^*(x),\nabla\Psi^*(y))\inner{\nabla_{x}\log(\frac{\bar{\mu}}{\bar{\pi}})(x)}{\nabla_{y}\log(\frac{\bar{\mu}}{\bar{\pi}})(y)}d\bar{\mu}(x)d\bar{\mu}(y)\\
				&=\norm{g_{\bar{\mu}}}_{\cH}^2,
			\end{aligned}
		\end{equation}
		where $x=\nabla\Psi(\theta),y=\nabla\Psi(\theta')$.
	\end{proof}
	
	\begin{proof}[\textbf{Proof of \Cref{lem:ttwwoo}}]
		The proof is direct.
		\begin{equation}
			\begin{aligned}
				&\int_{\theta\in\Omega}k(\theta,\cdot)\nabla^2\Psi(\theta)^{-1}\nabla_{\theta}\log(\frac{\mu}{\pi})(\theta)d\mu(\theta)\\
				&=\int_{\theta\in\Omega}k(\theta,\cdot)\nabla^2\Psi(\theta)^{-1}\nabla_{\theta}\log(\mu)(\theta)\mu(\theta)d\theta-
				\int_{\theta\in\Omega}k(\theta,\cdot)\nabla^2\Psi(\theta)^{-1}\nabla_{\theta}\log(\pi)(\theta)\mu(\theta)d\theta\\
				&=\int_{\theta\in\Omega}k(\theta,\cdot)\nabla^2\Psi(\theta)^{-1}\nabla_{\theta}\mu(\theta)d\theta-
				\int_{\theta\in\Omega}k(\theta,\cdot)\nabla^2\Psi(\theta)^{-1}\nabla_{\theta}\log(\pi)(\theta)\mu(\theta)d\theta\\
				&\overset{(i)}{=}\int_{\theta\in\Omega}\nabla_{\theta}\cdot\left(k(\theta,\cdot)\nabla^2\Psi(\theta)^{-1}\right)d\mu(\theta)-
				\int_{\theta\in\Omega}k(\theta,\cdot)\nabla^2\Psi(\theta)^{-1}\nabla_{\theta}\log(\pi)(\theta)\mu(\theta)d\theta,
			\end{aligned}
		\end{equation}
		$(i)$ is due to integration by parts and the assumption that $k(\theta,\cdot)\nabla^2\Psi(\theta)^{-1}\mu(\theta)\to 0$ as $\theta\to\partial\Omega$.
	\end{proof}
	
	Before we proceed to prove the left lemmas, we need to do some calculation first.
		Let $e_i$ be the $i$-th standard basis of $\R^d$, then
	\begin{equation*}
		\begin{aligned}
			&\langle \partial_{x_{i}}k(\nabla\Psi^*(x),\cdot),\partial_{y_{i}}k(\nabla\Psi^*(y),\cdot)\rangle_{\cH_0}\\
			&=\lim_{\epsilon_1\to 0}\lim_{\epsilon_2\to 0}\langle \frac{k(\nabla\Psi^*(x+\epsilon_1e_i),\cdot)-k(\nabla\Psi(x),\cdot)}{\epsilon_1}, \frac{k(\nabla\Psi^*(y+\epsilon_2e_i),\cdot)-k(\nabla\Psi^*(y),\cdot)}{\epsilon_2}\rangle_{\cH_0}\\
			&=\lim_{\epsilon_1\to 0}\frac{1}{\epsilon_1}\left\{\lim_{\epsilon_2\to 0}\frac{k(\nabla\Psi^*(x+\epsilon_1e_i),\nabla\Psi^*(y+\epsilon_2e_i))-k(\nabla\Psi^*(x+\epsilon_1e_i),\nabla\Psi^*(y))}{\epsilon_1\epsilon_2}\right.\\
			&\left.\quad-\lim_{\epsilon_2\to 0}\frac{k(\nabla\Psi^*(x),\nabla\Psi^*(y+\epsilon_2e_i))-k(\nabla\Psi^*(x),\nabla\Psi^*(y))}{\epsilon_1\epsilon_2}\right\}\\
			&=\lim_{\epsilon_1\to 0}\frac{\sum_{j=1}^d\partial_{i}\partial_j\Psi^*(y)\left(\partial_{\theta'_j}k(\nabla\Psi^*(x+\epsilon_1e_i),\theta')-\partial_{\theta'_j}k(\nabla\Psi^*(x),\theta')\right)}{\epsilon_1}\mid_{\theta'=\nabla\Psi^*(y)}\\
			&=\sum_{j,j'}\partial_i\partial_j\Psi^*(x)\partial_i\partial_{j'}\Psi^*(y)\partial_{\theta_{j}}\partial_{\theta'_{j'}}k(\theta,\theta')\mid_{\theta=\nabla \Psi^*(x),\theta'=\nabla\Psi^*(y)},
		\end{aligned}
	\end{equation*}
	set $x=y$ and denote $A(x)=\left(a_{ij}(x)\right)_{i=1,j=1}^{d,d}$ with $a_{ij}(x):=\sum_{k=1}^d\partial_k\partial_i\Psi^*(x)\partial_k\partial_{j}\Psi^*(x)$. Since $\Psi^*$ is convex, then $\nabla^2\Psi^*$ is non-negative definite and so is $A(x)=\nabla^2\Psi^*(x)\nabla^2\Psi^*(x)$, and then $|a_{ij}(x)|\leq \frac{a_{ii}+a_{jj}}{2}$~(since $a_{ii}a_{jj}-a_{ij}^2\geq 0$), so we have 
	\begin{equation}
		\begin{aligned}
			\norm{\nabla k_{\Psi}(x,\cdot)}^2_{\cH}&=\sum_{i=1}^d\langle \partial_{x_{i}}k(\nabla\Psi^*(x),\cdot),\partial_{x_{i}}k(\nabla\Psi^*(x),\cdot)\rangle_{\cH_0}\\
			&\leq \frac{1}{2}B^2\sum_{j,j'}\left(a_{jj}(x)+a_{j'j'}(x)\right)\\
			&=B^2d\operatorname{Tr}(A(x))\\
			&=B^2d\operatorname{Tr}(\nabla^2\Psi^*(x)\nabla^2\Psi^*(x))\\
			&\leq \frac{B^2d^2}{{\K}^2}.
		\end{aligned}
	\end{equation}
	\begin{proof}[\textbf{Proof of \Cref{lem:A3impliesA3}}]
		Denote $\Phi(x):=k_{\Psi}(x,\cdot)\in \mathcal{H}$. Then by definition of the Mirrored Stein Fisher information and integration bay parts, we have the following: 
		\begin{equation*}
			\begin{aligned}
				I_{\Stein}({\mu}_n\mid {\pi})^{\frac{1}{2}} 
				&=\left\|\mathbb{E}_{\bX \sim \bar{\mu}_n} \big[ (\nabla V(\bX) \Phi(\bX)-\nabla \Phi(\bX)) \big]\right\|_{\mathcal{H}}\\ 
				& \leq \mathbb{E}_{\bX \sim \bar{\mu}_n}\big[\|\nabla V(\bX) \Phi(\bX)-\nabla \Phi(\bX)\big]\|_{\mathcal{H}} \\ 
				& \leq \mathbb{E}_{\bX \sim \bar{\mu}_n}\big[\|\nabla V(\bX) \Phi(\bX)\|_{\mathcal{H}}\big]
				+ \mathbb{E}_{\bX \sim \bar{\mu}_n}\big[\|\nabla \Phi(\bX)\|_{\mathcal{H}}\big] \\ 
				&=\mathbb{E}_{\bX \sim \bar{\mu}_n}\big[\|\nabla V(\bX)\|\|\Phi(\bX)\|_{\mathcal{H}}\big] + \mathbb{E}_{\bX \sim \bar{\mu}_n}
				\big[\|\nabla \Phi(\bX)\|_{\mathcal{H}} \big] \\
				& \leq B_1\mathbb{E}_{\bX \sim \bar{\mu}_n}\big[\|\nabla V(\bX)\|\big]+\frac{B_2d}{K}.
			\end{aligned}
		\end{equation*}
	\end{proof}

	\begin{proof}[\textbf{Proof of \Cref{lem:exp-grad}}]
		\begin{equation}
			\begin{aligned}
				\EE_{\bX\sim\bar{\mu}_n}[\norm{\nabla V(\bX)}]&\leq C_{p}\EE_{\bX\sim\bar{\mu}_n}[\norm{\bX}^p]+C_p\\
				&=C_pW_p^p(\bar{\mu}_n,\delta_0)+C_p\\
				&\leq C_p\left(W_p(\bar{\mu}_n,\bar{\pi})+W_p(\bar{\pi},\bar{\mu}_0)+W_p(\bar{\mu}_0,\delta_0)\right)^p+C_p\\
				&\leq C_p\left(C_{\bar{\pi},p}\left(\KL(\bar{\mu}_n\mid\bar{\pi})^{\frac{1}{p}}+\left(\frac{\KL(\bar{\mu}_n\mid\bar{\pi})}{2}\right)^{\frac{1}{2p}}\right)\right.\\
				&\left.\quad+C_{\bar{\pi},p}\left(\KL(\bar{\mu}_0\mid\bar{\pi})^{\frac{1}{p}}+\left(\frac{\KL(\bar{\mu}_0\mid\bar{\pi})}{2}\right)^{\frac{1}{2p}}\right)+W_p(\bar{\mu}_0,\delta_0)\right)^p+C_p\\
				&=C_p\left(C_{\bar{\pi},p}\left(G_p(\KL(\bar{\mu}_n\mid\bar{\pi}))+G_p(\KL(\bar{\mu}_0\mid\bar{\pi}))\right)+W_p(\bar{\mu}_0,\delta_0)\right)^p+C_p.
			\end{aligned}
		\end{equation}
	\end{proof}

	\begin{proof}[\textbf{Proof of \Cref{lem:diff}}]
		\label{proof:lem:diff}
		%\Avo{check the notation. In particular why there is g'. In \cite{salim2021complexity} it is written with g. The problem might be the inclusion map. $S_{\mu}$} 
		The proof is based on the reproducing property and Cauchy-Schwarz inequality in the RKHS space. 
		Indeed,
		% \begin{equation}
			% 	\|g(x)\|^{2}=\sum_{i=1}^{a}\left\langle k(x, .), g_{i}^{\prime}\right\rangle_{\mathcal{H}_{0}}^{2} \leq\|k(x, .)\|_{\mathcal{H}_{0}}^{2}\left\|g\right\|_{\mathcal{H}}^{2} \leq B^{2} \left\|g\right\|_{\mathcal{H}}^{2} 
			% \end{equation}
		% Similarly,
		\begin{equation*}
			\begin{aligned}
				\|J h(x)\|_{\rm H S}^{2} &= \sum_{i, j=1}^{d}\left|\frac{\partial h_{i}(x)}{\partial x_{j}}\right|^{2}\\
				&=\sum_{i, j=1}^{d}\left\langle\partial_{x_{j}} k_{\Psi}(x, .), h_{i}\right\rangle_{\mathcal{H}_{0}} \\
				&\leq \sum_{i, j=1}^{d}\left\|\partial_{x_{j}} k_{\Psi}(x, .)\right\|_{\mathcal{H}_{0}}^{2}\left\|h_{i}\right\|_{\mathcal{H}_{0}}^{2} \\
				&=\|\nabla k_{\Psi}(x, .)\|_{\mathcal{H}}^{2}\left\|h\right\|_{\mathcal{H}}^{2} \\
				&\leq \frac{B^2d^2}{{\K}^2}\left\|h\right\|_{\mathcal{H}}^{2}   .
			\end{aligned}
		\end{equation*}
		This concludes the proof.
	\end{proof}

	\section{Main Theory}

	\begin{proof}[\textbf{Proof of \Cref{thm:mainl}}]
		%	Let us first prove that the sequence $\KL (\bar{\mu}_n \mid \bar{\pi})$ is monotonically decreasing. We will use the method of 
		%	mathematical induction. 
		From \Cref{eq:gamma-cond} and \Cref{lem:A3impliesA3}, we have 
		\begin{equation}\hspace{-13mm}
			\begin{aligned}
				&\min\left\{\frac{(\alpha-1){\K}}{\alpha B_2d\|g_{\bar{\mu}_n}\|_{\cH}},\frac{1}{B_1\|g_{\bar{\mu}_n}\|_{\cH}L_1},\frac{{\K}^2}{\left(\alpha^2B_2^2d^2+{\K}^2(e-1)B_1^2A_n\right)}\right\}\\
				&\geq \min\left\{\min\left\{\frac{1}{B_1L_1},\frac{(\alpha-1){\K}}{\alpha B_2d}\right\}\frac{K}{KB_1\mathbb{E}_{\bX \sim \bar{\mu}_n}\big[\|\nabla V(\bX)\|\big]+B_2d},\frac{{\K}^2}{\alpha^2B_2^2d^2+{\K}^2B_1^2(e-1)\left(L_1\mathbb{E}_{\bX \sim \bar{\mu}_n}\big[\|\nabla V(\bX)\|\big]+L_0\right)}\right\}\\
				&=:\MM\left(\mathbb{E}_{\bX \sim \bar{\mu}_n}\big[\|\nabla V(\bX)\|\big]\right),
			\end{aligned}
		\end{equation}
		so $\MM\left(\cdot\right)$ is a non-increasing function. By \Cref{lem:exp-grad}, we further have 
		\begin{equation}
			\MM\left(\mathbb{E}_{\bX \sim \bar{\mu}_n}\big[\|\nabla V(\bX)\|\big]\right)\geq \MM\left(C_p\left(C_{\bar{\pi},p}\left(G_p(\KL(\bar{\mu}_n\mid\bar{\pi}))+G_p(\KL(\bar{\mu}_0\mid\bar{\pi}))\right)+W_p(\bar{\mu}_0,\delta_0)\right)^p+C_p\right).
		\end{equation}
		From now on, we will use mathematical induction. First,
		\begin{equation}
			\begin{aligned}
				\gamma	&\leq\MM\left(C_p\left(2C_{\bar{\pi},p}G_p(\KL(\bar{\mu}_0\mid\bar{\pi}))+W_p(\bar{\mu}_0,\delta_0)\right)^p+C_p\right)\\
				&\leq \min\left\{\frac{(\alpha-1){\K}}{\alpha B_2d\|g_{\bar{\mu}_n}\|_{\cH}},\frac{1}{B_1\|g_{\bar{\mu}_n}\|_{\cH}L_1},\frac{{\K}^2}{\left(\alpha^2B_2^2d^2+{\K}^2(e-1)B_1^2A_n\right)}\right\},
			\end{aligned}
		\end{equation}
		so it satisfies the condition of \Cref{propl-descent-lemma} and we will have $\KL(\bar{\mu}_1\mid\bar{\pi})\leq\KL(\bar{\mu}_0\mid\bar{\pi})$. Next, suppose for integer $n\geq 1$, we have $\gamma	\leq\MM\left(C_p\left(2C_{\bar{\pi},p}G_p(\KL(\bar{\mu}_0\mid\bar{\pi}))+W_p(\bar{\mu}_0,\delta_0)\right)^p+C_p\right)$ satisfying the condition of \Cref{propl-descent-lemma} for $(\bar{\mu}_k)_{k=0}^n$~(and so the sequence $\left(\KL(\bar{\mu}_k\mid\bar{\pi})\right)_{k=0}^n$ non-increasing), we need to prove $\gamma	\leq\MM\left(C_p\left(2C_{\bar{\pi},p}G_p(\KL(\bar{\mu}_0\mid\bar{\pi}))+W_p(\bar{\mu}_0,\delta_0)\right)^p+C_p\right)$ also satisfies the condition of \Cref{propl-descent-lemma}, that is 
		\begin{equation}\label{eq:increase}
			\begin{aligned}
				\gamma	&\leq\MM\left(C_p\left(C_{\bar{\pi},p}\left(G_p(\KL(\bar{\mu}_n\mid\bar{\pi}))+G_p(\KL(\bar{\mu}_0\mid\bar{\pi}))\right)+W_p(\bar{\mu}_0,\delta_0)\right)^p+C_p\right)\\
				&\leq \min\left\{\frac{(\alpha-1){\K}}{\alpha B_2d\|g_{\bar{\mu}_n}\|_{\cH}},\frac{1}{B_1\|g_{\bar{\mu}_n}\|_{\cH}L_1},\frac{{\K}^2}{\left(\alpha^2B_2^2d^2+{\K}^2(e-1)B_1^2A_n\right)}\right\}
			\end{aligned}
		\end{equation}
		\eqref{eq:increase} is due to the non-increasing property of sequence $\left(\KL(\bar{\mu}_k\mid\bar{\pi})\right)_{k=0}^n$,
		\begin{equation}
			\begin{aligned}
				\gamma	&\leq\MM\left(C_p\left(2C_{\bar{\pi},p}G_p(\KL(\bar{\mu}_0\mid\bar{\pi}))+W_p(\bar{\mu}_0,\delta_0)\right)^p+C_p\right)\\
				&\leq \MM\left(C_p\left(C_{\bar{\pi},p}\left(G_p(\KL(\bar{\mu}_1\mid\bar{\pi}))+G_p(\KL(\bar{\mu}_0\mid\bar{\pi}))\right)+W_p(\bar{\mu}_0,\delta_0)\right)^p+C_p\right)\\
				&\qquad\vdots\\
				&\leq \MM\left(C_p\left(C_{\bar{\pi},p}\left(G_p(\KL(\bar{\mu}_n\mid\bar{\pi}))+G_p(\KL(\bar{\mu}_0\mid\bar{\pi}))\right)+W_p(\bar{\mu}_0,\delta_0)\right)^p+C_p\right)\\
				&\leq \min\left\{\frac{(\alpha-1){\K}}{\alpha B_2d\|g_{\bar{\mu}_n}\|_{\cH}},\frac{1}{B_1\|g_{\bar{\mu}_n}\|_{\cH}L_1},\frac{{\K}^2}{\left(\alpha^2B_2^2d^2+{\K}^2(e-1)B_1^2A_n\right)}\right\}.
			\end{aligned}
		\end{equation}
		So by \Cref{propl-descent-lemma}, we have $\left(\KL(\bar{\mu}_k\mid\bar{\pi})\right)_{k=0}^{n+1}$ non-increasing. By mathematical induction, we proved the theorem.
	\end{proof}

	\subsection{Complexity Analysis}
	In this section, we denote $\Pi$ as the area of unit radius circle.
	Assume $\bar{\mu}_0=\cN(0,{\rm I}_d)$, we need first calculate the value of $\int_{x\in\R^d}\norm{x}^{p+1}\bar{\mu}_0(x)dx$.
	\begin{lemma}\label{lem:calcu}
		Assume $\bar{\mu}_0=\cN(0,{\rm I}_d)$, then for any $p>-d-1$ we have 
		\begin{equation}
			\int_{x\in\R^d}\norm{x}^{p+1}\bar{\mu}_0(x)dx=2^{(p+1)/2}\frac{\Gamma(\frac{p+d+1}{2})}{\Gamma(\frac{d}{2})}.
		\end{equation}
	\end{lemma}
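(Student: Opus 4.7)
The plan is a direct computation using polar (spherical) coordinates on $\R^d$, exploiting the rotational invariance of the Gaussian density $\bar{\mu}_0(x) = (2\pi)^{-d/2} \exp(-\norm{x}^2/2)$. Since the integrand $\norm{x}^{p+1} \bar{\mu}_0(x)$ depends only on $\norm{x}$, passing to spherical coordinates $x = r\omega$ with $r = \norm{x} \in [0,\infty)$ and $\omega \in S^{d-1}$ factors the integral into an angular part (the surface area of the unit sphere) and a one-dimensional radial integral.

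First I would write
\begin{equation*}
\int_{\R^d} \norm{x}^{p+1} \bar{\mu}_0(x)\, dx = (2\pi)^{-d/2} \cdot \mathrm{Area}(S^{d-1}) \cdot \int_0^\infty r^{p+d}\, e^{-r^2/2}\, dr,
\end{equation*}
using that $dx = r^{d-1}\, dr\, d\sigma(\omega)$ and $\mathrm{Area}(S^{d-1}) = \tfrac{2\pi^{d/2}}{\Gamma(d/2)}$. The prefactor then simplifies to $\tfrac{2^{1 - d/2}}{\Gamma(d/2)}$.

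Next I would reduce the radial integral to a Gamma-function value via the substitution $u = r^2/2$, giving $dr = (2u)^{-1/2}\, du$ and
\begin{equation*}
\int_0^\infty r^{p+d} e^{-r^2/2}\, dr = \int_0^\infty (2u)^{(p+d-1)/2} e^{-u}\, du = 2^{(p+d-1)/2}\, \Gamma\!\left(\tfrac{p+d+1}{2}\right).
\end{equation*}
The integral converges precisely when $p + d + 1 > 0$, which is exactly the stated hypothesis $p > -d - 1$.

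Combining the two pieces, the powers of $2$ add up to $1 - d/2 + (p+d-1)/2 = (p+1)/2$, and the $\Gamma(d/2)$ factors yield the claimed ratio $\Gamma(\tfrac{p+d+1}{2})/\Gamma(d/2)$. There is no substantive obstacle here; the only thing to be careful about is bookkeeping the exponents of $2$ and verifying that the convergence condition on the Gamma integral coincides with the stated range $p > -d-1$.
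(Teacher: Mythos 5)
Your proposal is correct and follows essentially the same route as the paper: both pass to polar coordinates, pull out the surface area $\tfrac{2\pi^{d/2}}{\Gamma(d/2)}$ of $S^{d-1}$, and reduce to a one-dimensional radial integral. The only cosmetic difference is that you evaluate $\int_0^\infty r^{p+d}e^{-r^2/2}\,dr$ directly via the substitution $u=r^2/2$ (which also cleanly yields the convergence condition $p>-d-1$), whereas the paper invokes a cited formula for the absolute moments of a one-dimensional Gaussian; the exponent bookkeeping in your version checks out.
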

	\begin{proof}We integrate this in polar coordinate $\left(r,\varphi_{1},\ldots,\varphi_{d-1}\right)$.
		\begin{equation}\hspace{-5mm}
			\begin{aligned}
				&\int_{x\in\R^d}\norm{x}^{p+1}\bar{\mu}_0(x)dx\\
				&=(2\Pi)^{-d/2}\int_{x\in\R^d}\norm{x}^{p+1}\exp(-\normsq{x}/2)dx\\
				&=(2\Pi)^{-d/2}\int_{0}^\infty\int_{0}^{\Pi}\cdots\int_{0}^{\Pi}\int_{0}^{2\Pi}r^{p+1}\exp(-r^2/2)r^{d-1} \sin ^{d-2}\left(\varphi_{1}\right) \sin ^{d-3}\left(\varphi_{2}\right) \cdots \sin \left(\varphi_{d-2}\right) d r d \varphi_{1} d \varphi_{2} \cdots d \varphi_{d-1}\\
				&=(2\Pi)^{-d/2}\frac{\sqrt{2\Pi}}{2}\frac{1}{\sqrt{2\Pi}}\int_{-\infty}^{\infty}r^{p+d}\exp(-r^2/2)dr\int_{0}^{\Pi}\cdots\int_{0}^{\Pi}\int_{0}^{2\Pi} \sin ^{d-2}\left(\varphi_{1}\right) \sin ^{d-3}\left(\varphi_{2}\right) \cdots \sin \left(\varphi_{d-2}\right) d \varphi_{1} d \varphi_{2} \cdots d \varphi_{d-1}\\
				&=\frac{(2\Pi)^{-(d-1)/2}}{2}M_{p+d}|S_{d-1}|\\
				&=\frac{(2\Pi)^{-(d-1)/2}}{2}M_{p+d}\frac{2 \Pi^{\frac{d}{2}}}{\Gamma\left(\frac{d}{2}\right)}\\
				&= \frac{(2\Pi)^{-(d-1)/2}}{2}\frac{2^{(p+d)/2}\Gamma\left(\frac{p+d+1}{2}\right)}{\sqrt{\Pi}}\frac{2 \Pi^{\frac{d}{2}}}{\Gamma\left(\frac{d}{2}\right)}\\
				&=2^{(p+1)/2}\frac{\Gamma(\frac{p+d+1}{2})}{\Gamma(\frac{d}{2})},
			\end{aligned}
		\end{equation}
		where $|S_{d-1}|$ is the $d-1$-dimensional volume of the sphere $S_{d-1}$, $M_{p+1}:=\int_{x\in\R} |x|^{p+1} \frac{1}{\sqrt{2\Pi}}e^{-\frac{x^2}{2}}dx$ and from \cite{winkelbauer2012moments}, we know the $p+1$-th central absolute moment is $M_{p+1}=\frac{2^{(p+1)/2}\Gamma(\frac{p+2}{2})}{\sqrt{\Pi}}=\cO(\sqrt{2p}\left(\frac{p}{e}\right)^{\frac{p}{2}})$. 
	\end{proof}
	Assume $\bar{\pi}(x)=e^{-V(x)}$, then we have the following lemma.
	\begin{lemma}\label{lem:upperboundmu0}
		Let \Cref{poly-assumption} hold and $\bar{\mu}_0=\cN(0,{\rm I}_d)$, we then have
		\begin{equation}
			\KL(\bar{\mu}_0\mid\bar{\pi})\leq \frac{d}{2}\log(\frac{1}{2\Pi e})+V(0)+\frac{C_p}{p+1}\left(2^{(p+1)/2}\frac{\Gamma(\frac{p+d+1}{2})}{\Gamma(\frac{d}{2})}+(p+1)\sqrt{2}\frac{\Gamma(\frac{d+1}{2})}{\Gamma(\frac{d}{2})}\right).
		\end{equation}
	\end{lemma}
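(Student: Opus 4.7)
The plan is to split the KL divergence into the Gaussian differential entropy plus $\EE_{\bar{\mu}_0}[V]$, compute the entropy exactly, and then use the polynomial gradient bound from \Cref{poly-assumption} to reduce $\EE_{\bar{\mu}_0}[V]$ to moments of $\|X\|$ that are already tabulated in \Cref{lem:calcu}.

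First, since $\bar{\pi}(x)=e^{-V(x)}$ and $\bar{\mu}_0=\cN(0,{\rm I}_d)$, write
\begin{equation*}
\KL(\bar{\mu}_0\mid\bar{\pi})=\int\log\bar{\mu}_0(x)\,d\bar{\mu}_0(x)+\int V(x)\,d\bar{\mu}_0(x).
\end{equation*}
The first term is the (negative) differential entropy of a standard Gaussian, which is $-\tfrac{d}{2}\log(2\Pi)-\tfrac{1}{2}\EE[\|X\|^2]=-\tfrac{d}{2}\log(2\Pi e)=\tfrac{d}{2}\log\tfrac{1}{2\Pi e}$, matching the first term in the claimed bound.

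Next, to control $\EE_{\bar{\mu}_0}[V]$, I would apply the fundamental theorem of calculus along the segment from $0$ to $x$ to write $V(x)-V(0)=\int_0^1\inner{\nabla V(tx)}{x}dt$. Applying Cauchy--Schwarz and then the polynomial growth estimate from \Cref{poly-assumption}, $\|\nabla V(tx)\|\leq C_p(t^p\|x\|^p+1)$, gives
\begin{equation*}
V(x)\leq V(0)+\|x\|\int_0^1 C_p(t^p\|x\|^p+1)\,dt=V(0)+\frac{C_p\|x\|^{p+1}}{p+1}+C_p\|x\|.
\end{equation*}

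Finally, integrate this pointwise bound against $\bar{\mu}_0$. The two resulting moments $\EE_{\bar{\mu}_0}\|X\|^{p+1}$ and $\EE_{\bar{\mu}_0}\|X\|$ are exactly the $(p+1)$-th and first moments of the Euclidean norm of a standard Gaussian; by \Cref{lem:calcu} (applied with parameter $p$ and with $p=0$ respectively) these equal $2^{(p+1)/2}\Gamma(\tfrac{p+d+1}{2})/\Gamma(\tfrac{d}{2})$ and $\sqrt{2}\,\Gamma(\tfrac{d+1}{2})/\Gamma(\tfrac{d}{2})$. Adding the entropy contribution and factoring out $C_p/(p+1)$ gives precisely the stated upper bound. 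There is no real obstacle here: the only things to be careful about are the constant $V(0)$ (which accounts for the possibility that $V$ absorbs the normalization of $\bar{\pi}$ rather than being defined up to an additive constant) and making sure the $C_p\|x\|$ remainder is written as $(p+1)\sqrt{2}\,\Gamma(\tfrac{d+1}{2})/\Gamma(\tfrac{d}{2})$ after pulling the common $1/(p+1)$ outside.
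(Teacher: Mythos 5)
Your proposal is correct and follows essentially the same route as the paper: the same split of the KL divergence into the Gaussian entropy term plus $\EE_{\bar{\mu}_0}[V]$, the same fundamental-theorem-of-calculus bound $V(x)\leq V(0)+C_p\bigl(\tfrac{\norm{x}^{p+1}}{p+1}+\norm{x}\bigr)$ derived from \Cref{poly-assumption}, and the same evaluation of the resulting Gaussian moments via \Cref{lem:calcu}. No gaps.
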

	\begin{proof}
		By\Cref{poly-assumption}, we know 
		\begin{equation}
			\big\|\nabla V(x)\big\| \leq  C_p\left(\norm{x}^p+1\right),
		\end{equation}
		so 
		\begin{equation}\label{eq:aaa}
			\begin{aligned}
				V(x)&=\int_{0}^1\inner{\nabla V(tx)}{x}dt+V(0)\\
				&\leq \int_{0}^1\norm{\nabla V(tx)}\norm{x}dt+V(0)\\
				&\leq \int_{0}^1C_p\left(\norm{tx}^p+1\right)\norm{x}dt+V(0)\\
				&\leq C_p\left(\frac{\norm{x}^{p+1}}{p+1}+\norm{x}\right)+V(0).
			\end{aligned}
		\end{equation}
		We procced to calculate $\KL(\bar{\mu}_0\mid\bar{\pi})$,
		\begin{equation}
			\begin{aligned}
				\KL(\bar{\mu}_0\mid\bar{\pi})&=\int_{x\in\R^d}\log(\frac{\bar{\mu}_0}{\bar{\pi}})(x)\bar{\mu}_0(x)dx\\
				&=\int_{x\in\R^d} \log(\bar{\mu}_0)(x)\bar{\mu}_0(x)dx+\int_{x\in\R^d} V(x)\bar{\mu}_0(x)dx\\
				&\leq\frac{d}{2}\log(\frac{1}{2\Pi e})+\int_{x\in\R^d} \left(C_p\left(\frac{\norm{x}^{p+1}}{p+1}+\norm{x}\right)+V(0)\right)\bar{\mu}_0(x)dx\\
				&=  \frac{d}{2}\log(\frac{1}{2\Pi e})+V(0)+\frac{C_p}{p+1}\int_{x\in\R^d}{\norm{x}^{p+1}}+(p+1)\norm{x}\bar{\mu}_0(x)dx\\
				&= \frac{d}{2}\log(\frac{1}{2\Pi e})+V(0)+\frac{C_p}{p+1}\left(2^{(p+1)/2}\frac{\Gamma(\frac{p+d+1}{2})}{\Gamma(\frac{d}{2})}+(p+1)\sqrt{2}\frac{\Gamma(\frac{d+1}{2})}{\Gamma(\frac{d}{2})}\right)
			\end{aligned}
		\end{equation}
		where we use \Cref{lem:calcu}.
	\end{proof}
	According to \Cref{lem:upperboundmu0}, the order of  
	$\KL(\bar{\mu}_0\mid \bar{\pi})$ is
	\begin{equation}\label{eq:klklbound}
		\KL(\bar{\mu}_{0}\mid\bar{\pi})=\tilde{\mathcal{O}}\left(\frac{2^{(p+1)/2}\Gamma(\frac{p+d+1}{2})}{(p+1)\Gamma(\frac{d}{2})}\right).
	\end{equation}
	
	%\begin{theorem}
	%	Let \Cref{Al-ker-bound}, \Cref{Al-l0l1}, \Cref{Tpl-assumption} and \Cref{poly-assumption} hold and let $\bar{\mu}_0=\cN(0,{\rm I}_d)$,  then
	%	\begin{equation} \label{eq:interatincomplexity}
		%	n=\Omega\left(\frac{2^{(p+1)/2}L_0C_p\Gamma(\frac{p+d+1}{2})}{(p+1)\Gamma(\frac{d}{2})\varepsilon}+
		%	\frac{8^{p}(L_1+1)C_{\bar{\pi},p}^pC_p^3\Gamma(\frac{p+d+1}{2})^2}{(p+1)^2\Gamma(\frac{d}{2})^2\varepsilon}\right)
		%	\end{equation}
	%	iterations of SVGD are sufficient to output $\bar{\mu}$ such that $I_{{Stein }}(\bar{\mu} \mid \bar{\pi}) \leq \varepsilon$.
	%\end{theorem}
	
	\begin{proof}[\textbf{Proof of \Cref{cor:convergence}}]
		By \Cref{thm:mainl}, we know that
		\begin{equation*}
			\frac{1}{n} \sum_{k=0}^{n-1} I_{\Stein}\left(\bar{\mu}_{k} \mid \bar{\pi}\right) \leq\frac{2\KL(\bar{\mu}_0\mid\bar{\pi})}{n\gamma},
		\end{equation*}
		we need to estimate the order of $\frac{\KL(\bar{\mu}_0\mid\bar{\pi})}{\gamma}$.  Firstly, from \Cref{lem:upperboundmu0}  we know $\KL(\bar{\mu}_0\mid\bar{\pi})={\cO}((C_p+1)dM_p)$, so
		\begin{equation}
			\begin{aligned}
				G_p(\KL(\bar{\mu}_0\mid\bar{\pi}))&=\left(\KL(\bar{\mu}_0\mid\bar{\pi})^{\frac{1}{p}}+\left(\frac{\KL(\bar{\mu}_0\mid\bar{\pi})}{2}\right)^{\frac{1}{2p}}\right)\\
				&= {\cO}(\KL(\bar{\mu}_0\mid\bar{\pi})^{\frac{1}{p}}).
			\end{aligned}	
		\end{equation}
		
		Next, we estimate the order of $\frac{1}{\gamma}$, we have 
		\begin{equation}\label{eq:gang} 
			\begin{aligned}
				\frac{1}{\gamma}&= \frac{1}{\MM\left(C_p\left(2C_{\bar{\pi},p}G_p(\KL(\bar{\mu}_0\mid\bar{\pi}))+W_p(\bar{\mu}_0,\delta_0)\right)^p+C_p\right)}\\
				&=\tilde{\mathcal{O}}(\left(2C_{\bar{\pi},p}G_p(\KL(\bar{\mu}_0\mid\bar{\pi}))+W_p(\bar{\mu}_0,\delta_0)\right)^p)\\
				&=\tilde{\mathcal{O}}\left(2C_{\bar{\pi},p}\left(\frac{2^{(p+1)/2}\Gamma(\frac{p+d+1}{2})}{(p+1)\Gamma(\frac{d}{2})}\right)^{\frac{1}{p}}+\left(2^{\frac{p}{2}}\frac{\Gamma(\frac{p+d}{2})}{\Gamma(\frac{d}{2})}\right)^{\frac{1}{p}}\right)^p\\
				&=\tilde{\mathcal{O}}\left(\frac{32^{\frac{p}{2}}C_{\bar{\pi},p}^p\Gamma(\frac{p+d+1}{2})}{(p+1)\Gamma(\frac{d}{2})}\right).
			\end{aligned}
		\end{equation}
		
		So we finally have 
		\begin{equation}
			\frac{\KL(\bar{\mu}_0\mid\bar{\pi})}{\gamma}=\tilde{\mathcal{O}}\left(
			\frac{8^{p}C_{\bar{\pi},p}^p\Gamma(\frac{p+d+1}{2})^2}{(p+1)^2\Gamma(\frac{d}{2})^2}\right),
		\end{equation}
		to get $\frac{2\KL(\bar{\mu}_0\mid\bar{\pi})}{n\gamma}\leq\varepsilon$, we need $n\geq\frac{2\KL(\bar{\mu}_0\mid\bar{\pi})}{\gamma\varepsilon}$, that is 
		\begin{equation}
			n=\tilde{\Omega}\left(
			\frac{8^{p}C_{\bar{\pi},p}^p\Gamma(\frac{p+d+1}{2})^2}{(p+1)^2\Gamma(\frac{d}{2})^2\varepsilon}\right).
		\end{equation}
	\end{proof}
	
	By \text { the Stirling formula } $\Gamma(z+1) \sim \sqrt{2 \Pi z}\left(\frac{z}{e}\right)^{z}$, we know 
	\begin{equation}\label{eq:firstdimbound}
		\begin{aligned}
			\frac{\Gamma(\frac{p+d+1}{2})^2}{\Gamma(\frac{d}{2})^2}
			&=\Omega\left(\frac{{\Pi(d+p-1)}\left(\frac{d+p-1}{2e}\right)^{{d+p-1}}}{{\Pi(d-2)}\left(\frac{d-2}{2e}\right)^{{d-2}}}\right)\\
			&=\Omega\left(\left(\frac{d+p-1}{d-2}\right)^{d-2}\left(\frac{d+p-1}{2e}\right)^{p+1}\right)\\
			&=\Omega\left(d^{p+1}\right), \quad \text{where we only consider the order of $d$},
		\end{aligned}
	\end{equation}
	so the iteration complexity has dimension dependency of order $p+1$.

	\section{The Cases under $\text{T}_p$ inequality with $1\leq p\leq 2$}\label{sec:Tpcase}
%	In this section, we assume $\bar{\pi}$ satisfies Talagrand's $\text{T}_p$ inequality \eqref{eq:Tp} instead of \Cref{Tpl-assumption} and keep the other assumptions the same. When $p\in [1,2]$, \Cref{eq:Tp} is more preferable in our analysis since $W_p(\bar{\mu},\bar{\pi})=\cO(\KL(\bar{\mu}\mid\bar{\pi})^{\frac{1}{2}})$ while \Cref{eq:generalTp} gives the bound $W_p(\bar{\mu},\bar{\pi})=\cO(\KL(\bar{\mu}\mid\bar{\pi})^{\frac{1}{p}})$ which is bigger.We focus on the cases when $p\in [1,2]$ and we give a modified version of \Cref{lem:exp-grad}, \Cref{thm:mainl} and \Cref{cor:convergence}.
%	
%	\begin{lemma}[\textbf{modified \Cref{lem:exp-grad} under $\text{T}_p$ inequality with $1\leq p\leq 2$}]\label{mlem:exp-grad}
%		Suppose that the potential function $V$ satisfies the Assumptions \ref{Al-l0l1}, \ref{poly-assumption} and \Cref{eq:Tp} with constants $L_0$, $L_1$, $p$. Then,
%		\begin{equation}\label{meq:muupper}
%			%		\EE_{\bX\sim\bar{\mu}_n}[\|\nabla V(\bX)\|] \leq 2^{p-1}C_p W_p^p(\bar{\pi},\delta_0)+C_p+2^{p-1}C_p C_{\bar{\pi},p}^pG_p^p(\KL(\bar{\mu}_n\mid\bar{\pi}))\\
%			\EE_{\bX\sim\bar{\mu}_n}[\|\nabla V(\bX)\|] \leq C_p\left(\sqrt{\frac{2\KL(\bar{\mu}_n\mid\bar{\pi})}{\lambda}}+\sqrt{\frac{2\KL(\bar{\mu}_0\mid\bar{\pi})}{\lambda}}+W_p(\bar{\mu}_0,\delta_0)\right)^p+C_p,
%		\end{equation}
%		where $C_p$ from \Cref{poly-assumption}.
%	\end{lemma}
	\begin{proof}[\textbf{Proof of \Cref{mlem:exp-grad}}]
		\begin{equation}
			\begin{aligned}
				\EE_{\bX\sim\bar{\mu}_n}[\norm{\nabla V(\bX)}]&\leq C_{p}\EE_{\bX\sim\bar{\mu}_n}[\norm{\bX}^p]+C_p\\
				&=C_pW_p^p(\bar{\mu}_n,\delta_0)+C_p\\
				&\leq C_p\left(W_p(\bar{\mu}_n,\bar{\pi})+W_p(\bar{\pi},\bar{\mu}_0)+W_p(\bar{\mu}_0,\delta_0)\right)^p+C_p\\
				&\leq C_p\left(\sqrt{\frac{2\KL(\bar{\mu}_n\mid\bar{\pi})}{\lambda}}+\sqrt{\frac{2\KL(\bar{\mu}_0\mid\bar{\pi})}{\lambda}}+W_p(\bar{\mu}_0,\delta_0)\right)^p+C_p
			\end{aligned}
		\end{equation}
	\end{proof}
	
%	\begin{theorem}[\textbf{modified \Cref{thm:mainl} under $\text{T}_p$ inequality with $1\leq p\leq 2$}]
%		\label{mthm:mainl}
%		Let the target distribution $\bar{\pi}$ satisfies \Cref{eq:Tp} and its potential function $V$ satisfy the Assumptions \ref{Al-ker-bound}, \ref{Al-l0l1},
%		and \ref{poly-assumption}.%\aknote{Perhaps adding labels to the assumptions instead numbers, would ease the reading.}
%		~	Also, suppose that the step-size $\gamma$ satisfies 
%		\begin{equation} \label{meq:gamma-final-condl}\hspace{-10mm}
%			\gamma\leq\MM\left(C_p\left(\sqrt{\frac{2\KL(\bar{\mu}_n\mid\bar{\pi})}{\lambda}}+\sqrt{\frac{2\KL(\bar{\mu}_0\mid\bar{\pi})}{\lambda}}+W_p(\bar{\mu}_0,\delta_0)\right)^p+C_p\right),
%		\end{equation}
%		where  $\MM(x):=\min\left\{\min\left\{\frac{1}{B_1L_1},\frac{(\alpha-1){\K}}{\alpha B_2d}\right\}\frac{1}{B_1x+B_2d},\frac{{\K}^2}{\alpha^2B_2^2d^2+{\K}^2B_1^2(e-1)\left(L_1x+L_0\right)}\right\}$,
%		then the following bound is true:
%		\begin{equation}\label{eq:thm-main}
%			\mathrm{KL}\left(\bar{\mu}_{n+1} \mid \bar{\pi}\right)-\mathrm{KL}\left(\bar{\mu}_{n} \mid \bar{\pi}\right)
%			\leq - \frac{\gamma}{2} I_{Stein}(\bar{\mu}_n \mid \bar{\pi}).
%		\end{equation}
%	\end{theorem}
	\begin{proof}[\textbf{Proof of \Cref{mthm:mainl}}]
		The proof is the same as the one for \Cref{thm:mainl} but with this new bound \eqref{meq:muupper}.
	\end{proof}

%	\begin{corollary}[\textbf{modified \Cref{cor:convergence} under $\text{T}_p$ inequality with $1\leq p\leq 2$}]\label{mcor:convergence}
%		Let the target distribution $\bar{\pi}$ satisfies \Cref{eq:Tp} and its potential function $V$ satisfy the Assumptions \ref{Al-ker-bound}, \ref{Al-l0l1},
%		and \ref{poly-assumption}. If $\gamma$ satisfies condition \eqref{meq:gamma-final-condl}, then
%		we have
%		\begin{equation}
%			\frac{1}{n} \sum_{k=0}^{n-1} I_{\Stein}\left(\bar{\mu}_{k} \mid \bar{\pi}\right) 
%			\leq \frac{2 \KL (\bar{\mu}_0\mid \bar{\pi}) }{n \gamma}.
%		\end{equation}
%		If~ $\bar{\pi}(x)=\exp(-V(x))$ and let $\bar{\mu}_0=\cN(0,{\rm I}_d)$,  then to make $\frac{1}{n} \sum_{k=0}^{n-1} I_{\Stein}\left(\bar{\mu}_{k} \mid \bar{\pi}\right) \leq\varepsilon$, we need 
%		\begin{equation}\label{eq:iteration bound}
%			n=\tilde{\Omega}\left(\frac{d^{\frac{(p+2)(p+1)}{4}}}{\lambda^{\frac{p}{2}}\varepsilon}\right).
%		\end{equation}
%	\end{corollary}
	
	\begin{proof}[\textbf{Proof of \Cref{mcor:convergence}}]
		Similar to the proof of \Cref{cor:convergence}, we need to estimate the order of $\frac{\KL(\bar{\mu}_0\mid\bar{\pi})}{\gamma}$.
		By \Cref{eq:klklbound} and \text { the Stirling formula } $\Gamma(z+1) \sim \sqrt{2 \Pi z}\left(\frac{z}{e}\right)^{z}$, we know 
		\begin{equation}\label{eq:kkllbound}
			\begin{aligned}
				\KL(\bar{\mu}_{0}\mid\bar{\pi})&=\tilde{\mathcal{O}}\left(\frac{\Gamma(\frac{p+d+1}{2})}{\Gamma(\frac{d}{2})}\right)\\
				&=\tilde{\cO}\left(\frac{\sqrt{\Pi(d+p-1)}\left(\frac{d+p-1}{2e}\right)^{\frac{d+p-1}{2}}}{\sqrt{\Pi(d-2)}\left(\frac{d-2}{2e}\right)^{\frac{d-2}{2}}}\right)\\
				&=\tilde{\cO}\left(\left(\frac{d+p-1}{d-2}\right)^{\frac{d-2}{2}}\left(\frac{d+p-1}{2e}\right)^{\frac{p+1}{2}}\right)\\
				&=\tilde{\cO}\left(d^\frac{p+1}{2}\right), 
			\end{aligned}
		\end{equation}
		we also know 
		\begin{equation}
			\begin{aligned}
				W_p^p(\bar{\mu}_0,\delta_0)=\int_{x\in\R^d}\norm{x}^pd\bar{\mu}_0(x)=2^{\frac{p}{2}}\frac{\Gamma(\frac{p+d}{2})}{\Gamma{\frac{d}{2}}}=\cO\left(d^{\frac{p}{2}}\right).
			\end{aligned}
		\end{equation}
		So we have 
		\begin{equation}
			\begin{aligned}
				&\left(2\sqrt{\frac{2\KL(\bar{\mu}_0\mid\bar{\pi})}{\lambda}}+W_p(\bar{\mu}_0,\delta_0)\right)^p\\
				&=\tilde{\cO}\left(\left(\sqrt{\frac{1}{\lambda}}d^{\frac{p+1}{4}}+d^{\frac{1}{2}}\right)^p\right)\\
				&=\tilde{\cO}\left(\max\left\{\frac{1}{\lambda^{\frac{p}{2}}}d^{\frac{p(p+1)}{4}},d^{\frac{p}{2}}\right\}\right)\\
				&=\tilde{\cO}\left(\frac{d^{\frac{p(p+1)}{4}}}{\lambda^{\frac{p}{2}}}\right),
			\end{aligned}
		\end{equation}
		and then
		\begin{equation}
			\begin{aligned}
				\frac{\KL(\bar{\mu}_0\mid\bar{\pi})}{\gamma}&=\tilde{\cO}\left(\frac{d^{\frac{p(p+1)}{4}}}{\lambda^{\frac{p}{2}}}d^{\frac{p+1}{2}}\right)\\
				&=\tilde{\cO}\left(\frac{d^{\frac{(p+2)(p+1)}{4}}}{\lambda^{\frac{p}{2}}}\right).
			\end{aligned}
		\end{equation}
		To get $\frac{2\KL(\bar{\mu}_0\mid\bar{\pi})}{n\gamma}\leq\varepsilon$, we need $n\geq\frac{2\KL(\bar{\mu}_0\mid\bar{\pi})}{\gamma\varepsilon}$, that is 
		\begin{equation}
			n=\tilde{\Omega}\left(\frac{d^{\frac{(p+2)(p+1)}{4}}}{\lambda^{\frac{p}{2}}\varepsilon}\right).
		\end{equation}
	\end{proof}
	%Compare with \Cref{eq:aaasss} and \Cref{eq:iteration bound}, we know under $\text{T}_p$ inequality \eqref{eq:Tp} with $1\leq p\leq2$, iteration complexity $n$ will have slightly better dimension dependent.

\end{document}